\tikzstyle{startstop} = [rectangle, rounded corners, minimum width=2cm, minimum height=1cm,text centered,text width=2cm, draw=black]
\tikzstyle{io} = [rectangle, rounded corners, minimum width=2cm, minimum height=1cm,text centered,text width=2cm, draw=black]
\tikzstyle{arrow} = [thick,->,>=stealth]
\tikzstyle{arrow2} = [dashed,->,>=stealth]
\newtheorem{thm}{Theorem}[section]
\newtheorem{cor}[thm]{Corollary}
\newtheorem{lem}[thm]{Lemma}
\newtheorem{prop}[thm]{Proposition}
\theoremstyle{definition}
\newtheorem{defn}[thm]{Definition}
\theoremstyle{remark}
\newtheorem{rem}[thm]{Remark}
\numberwithin{equation}{section}
\DeclareMathOperator{\clco}{\overline{\textnormal{conv}}}
\newcommand{\pten}{\ensuremath{\widehat{\otimes}_\pi}}
\newcommand{\Free}{{\mathcal F}}
\newcommand{\Lip}{{\mathrm{Lip}}_0}
\newcommand{\spa}{\operatorname{span}}
\newcommand{\xast}{x^{\ast}}
\newcommand{\Xast}{X^{\ast}}
\newcommand{\Yast}{Y^{\ast}}
\newcommand{\n}[1]{\left\|#1\right\|}
\title{Stability of diametral diameter TWO properties}
\author{Johann Langemets and Katriin Pirk}
\address{Institute of Mathematics and Statistics, University of Tartu, Narva mnt 18, 51009 Tartu, Estonia}
\email{johann.langemets@ut.ee, katriin.pirk@ut.ee}
\thanks{This work was supported by the Estonian Research Council grants (PSG487) and (PRG877).}
\urladdr{\url{https://johannlangemets.wordpress.com/}}
\keywords {Diameter two property, Daugavet property, $M$-ideal, Köthe--Bochner space, Projective tensor product}
\subjclass[2010]{46B04, 46B20, 46B22, 46B42}
\begin{document}

\begin{abstract}
  We prove that the diametral diameter two properties are inherited by $F$-ideals (e.g., $M$-ideals). On the other hand, these properties are lifted from an $M$-ideal to the superspace under strong geometric assumptions. We also show that all of the diametral diameter two properties are stable under the formation of corresponding Köthe--Bochner spaces (e.g., $L_p$-Bochner spaces). Finally, we investigate when the projective tensor product of two Banach spaces has some diametral diameter two property. 
\end{abstract}

\maketitle

\section{Introduction}

A Banach space $X$ is said to have the \emph{Daugavet property} if every rank-one operator $T\colon X\to X$ satisfies the norm equality
\[
\|I+T\|=1+\|T\|,  
\] 
where $I\colon X\to X$ denotes the identity operator. In 1963, I.~K.~Daugavet (see \cite{Daugavet1963}) was the first to observe that the space $C[0,1]$ has the Daugavet property. Since then this property has attracted a lot of attention and many examples have emerged. Indeed, $C(K)$ spaces for a compact Hausdorff space $K$ without isolated points, $L_1(\mu)$ and $L_\infty(\mu)$ spaces for atomless measure $\mu$, and Lipschitz-free spaces $\mathcal{F}(M)$ and their duals $\text{Lip}_0(M)$ whenever $M$ is a length space -- all have the Daugavet property. 

The following geometric characterisation has been a very useful tool to enlarge the class of spaces with the Daugavet property. Moreover, it has inspired many new properties around the Daugavet property (see e.g., \cite{Abrahamsen2013}, \cite{Becerra2018a}, \cite{Haller2019}, \cite{Haller2020}).

\begin{lem}[see {\cite[Lemma~2.1]{Kadets2000}}]\label{lem: DP slices}
  \label{thm:kadets Daugavet}
  A Banach space $X$ has the Daugavet property if and only if for every $x\in S_X$, every slice $S$ of $B_X$, and every $\varepsilon > 0,$ there exists $y \in S$ such that $\|x - y\| \ge 2 - \varepsilon.$ 
 \end{lem}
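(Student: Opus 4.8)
The plan is to establish the equivalence by proving each implication separately, exploiting throughout that every rank-one operator can be written as $T = x^* \otimes u$, acting by $z \mapsto x^*(z)\,u$, and that after rescaling $x^*$ and $u$ we may assume $x^* \in S_{X^*}$, so that $\|T\| = \|u\|$ and the Daugavet equation reads $\|I + T\| = 1 + \|u\|$. In either direction the inequality $\|I + T\| \le 1 + \|T\|$ is automatic from the triangle inequality, so the whole content lies in the lower bound, which I will always obtain by evaluating $I + T$ at a single cleverly chosen point of $B_X$ supplied by the slice condition.

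For the implication from the slice condition to the Daugavet property, fix such a $T = x^* \otimes u$ with $u \ne 0$ and consider the slice $S = \{z \in B_X : \operatorname{Re} x^*(z) > 1 - \alpha\}$. Applying the hypothesis to the point $-u/\|u\| \in S_X$ and this slice yields $y \in S$ with $\|y + u/\|u\|\| \ge 2 - \varepsilon$. The key step is to transfer this near-alignment from the \emph{normalised} vector $u/\|u\|$ to $u$ itself: choosing a norming functional $\psi \in S_{X^*}$ for $y + u/\|u\|$ forces $\operatorname{Re}\psi(y)$ and $\operatorname{Re}\psi(u/\|u\|)$ to be simultaneously at least $1 - \varepsilon$ (each summand being at most $1$), whence $\operatorname{Re}\psi(y + u) \ge (1 + \|u\|)(1 - \varepsilon)$ and therefore $\|y + u\| \ge (1 + \|u\|)(1 - \varepsilon)$. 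Since $y \in S$ keeps $x^*(y)$ close to $1$, replacing $u$ by $x^*(y)u$ costs an error that vanishes with $\alpha$, so $\|(I + T)y\| = \|y + x^*(y)u\| \ge (1 + \|u\|)(1 - \varepsilon) - o(1)$; letting $\varepsilon,\alpha \to 0$ gives $\|I + T\| \ge 1 + \|u\|$.

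For the converse, assume the Daugavet property and fix $x \in S_X$, a slice $S = \{z \in B_X : \operatorname{Re} x^*(z) > 1 - \alpha\}$, and $\varepsilon > 0$. Apply the Daugavet equation to the norm-one operator $T = x^* \otimes (-x)$: from $\|I + T\| = 2$ pick $z \in B_X$ with $\|z - x^*(z)x\| \ge 2 - \delta$ for a small $\delta$. Since $\|z - x^*(z) x\| \le \|z\| + |x^*(z)| \le 2$, both $\|z\|$ and $|x^*(z)|$ must lie within $\delta$ of $1$; multiplying $z$ by a suitable unimodular scalar (which leaves the displayed norm unchanged) we may assume that $x^*(z)$ is real and exceeds $1 - \delta$, so that $y := z \in S$ as soon as $\delta < \alpha$. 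Because $x^*(z)$ is now close to $1$, the bound $\|z - x^*(z)x\| \ge 2 - \delta$ improves to $\|x - y\| = \|z - x\| \ge 2 - 2\delta$, which is the required conclusion once $\delta \le \varepsilon/2$.

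The step I expect to be the main obstacle is the scaling reconciliation in the first implication: the geometric hypothesis only furnishes a point close to the normalised vector $u/\|u\|$, whereas the operator norm $\|I + T\|$ involves the unnormalised $u$, and naively comparing $\|y + u/\|u\|\|$ with $\|y + u\|$ fails. Passing through a common norming functional $\psi$, which simultaneously nearly norms both $y$ and $u/\|u\|$, is the device that bridges the two scales. Everything else is routine, including the unimodular rotation in the converse needed to place the extremal point inside the prescribed slice rather than a rotate of it.
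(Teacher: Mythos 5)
Your proof is correct. The paper does not prove this lemma at all---it quotes it directly from \cite[Lemma~2.1]{Kadets2000}---and your argument (writing rank-one operators as $x^*\otimes u$, using a common norming functional to pass from the normalised $u/\|u\|$ to $u$ itself, and the sign-flip placing the extremal point inside the given slice) is essentially the classical proof from that reference, so there is nothing to add.
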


 Observe that Lemma~\ref{lem: DP slices} immediately gives that in a space with the Daugavet property every slice of the unit ball has diameter two. Moreover, it is known that spaces with the Daugavet property satisfy that all nonempty relatively weakly open subsets of the unit ball (e.g., slices) have diameter two (see \cite{Shvydkoy2000}). A Banach space with such a property is said to have the \emph{diameter two property} (D2P in short). If a Banach space satisfies that all of the slices of the unit ball have diameter two, then it is said that the space has the \emph{local diameter two property} (LD2P in short). Clearly, the Daugavet property implies the D2P and the D2P implies the LD2P. However, both of the converses are not true in general. The space $c_0$ is an example of a space with the D2P and failing the Daugavet property. The existence of a space with the LD2P and without the D2P was first proven in \cite{Becerra2015}, another example can be found in \cite{Abrahamsen2020}.

In \cite{Ivakhno2004} another weakening of the Daugavet property was considered: a Banach space $X$ is said to be a \emph{space with bad projections} (SBP in short) if for every rank-one projection $P\colon X\to X$ one has that $\|I-P\|\geq 2$. Y.~Ivakhno and V.~Kadets obtained the following geometrical characterization of a space with bad projections.

\begin{thm}[see {\cite[Theorem 1.4]{Ivakhno2004}}]
 \label{thm:ik-sbp-dld2p}
 A Banach space $X$ is SBP if and only if for every slice $S$ of $B_X$, every unit sphere element $x \in S$, and every $\varepsilon > 0,$ there exists $y \in S$ such that $\|x - y\| \ge 2 - \varepsilon.$ 
\end{thm}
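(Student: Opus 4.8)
The plan is to test the operator definition of SBP against the geometry of $B_X$ through the rank-one projection naturally attached to a slice. Writing a slice as $S=\{z\in B_X:\xast(z)>1-\alpha\}$ with $\xast\in S_{\Xast}$ and $\alpha\in(0,1)$, and fixing $x\in S\cap S_X$ with $c:=\xast(x)>1-\alpha$, the relevant projection is $P\colon z\mapsto \tfrac{\xast(z)}{c}\,x$, which is idempotent since $\tfrac{\xast}{c}(x)=1$, has range $\spa(x)$ and kernel $\ker\xast$. Both implications rest on comparing $\|I-P\|=\sup_{z\in B_X}\|z-\tfrac{\xast(z)}{c}x\|$ with the distances $\|x-y\|$, $y\in S$; note $(I-P)x=0$ and $\ran(I-P)\subseteq\ker\xast$.

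For the implication $(\Leftarrow)$ I would begin from an arbitrary rank-one projection and normalise it: every such $P$ can be written as $Pz=\phi(z)u$ with $\|u\|=1$, $\phi(u)=1$ and $M:=\|\phi\|\ge 1$ (rescaling $u,\phi$ leaves $P$ unchanged). Apply the hypothesis to the slice $S=\{z\in B_X:\phi(z)>1-M\eta\}$, which contains the sphere point $u$, to produce $y\in S$ with $\|u-y\|\ge 2-\varepsilon$. Choosing $g\in S_{\Xast}$ with $g(y-u)=\|y-u\|$ forces $g(y)\ge 1-\varepsilon$ and $g(u)\le-(1-\varepsilon)$, and since $\phi(y)>1-M\eta>0$ one gets
\[
\|(I-P)y\|=\|y-\phi(y)u\|\ge g(y)-\phi(y)g(u)\ge (1-\varepsilon)\bigl(1+\phi(y)\bigr)\ge (1-\varepsilon)(2-M\eta).
\]
Letting $\varepsilon,\eta\to 0$ gives $\|I-P\|\ge 2$, so $X$ is SBP. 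This direction is routine once the normalisation and the choice of slice are in place.

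The implication $(\Rightarrow)$ is the delicate one. Applying SBP to the projection $P\colon z\mapsto\tfrac{\xast(z)}{c}x$, I obtain for each $\delta>0$ a point $z\in B_X$ with $w:=z-\tfrac{\xast(z)}{c}x\in\ker\xast$ and $\|w\|>2-\delta$. The natural candidate far point is $x+w$: it satisfies $\xast(x+w)=c>1-\alpha$ and $\|x-(x+w)\|=\|w\|>2-\delta$, so it would lie in $S$ at distance almost $2$ from $x$. The main obstacle is precisely that $x+w$ need not lie in $B_X$. To repair this I would study the line $\ell(\lambda)=w+\lambda x$, which meets $B_X$ (since $w=(I-P)z$ for $z\in B_X$) in a nonempty interval $[\lambda_-,\lambda_+]$ not containing $0$ (as $\|\ell(0)\|=\|w\|>1$); after replacing $w$ by $-w$ if necessary one may assume $0<\lambda_-\le\lambda_+$. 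The identity $\|x-\ell(\lambda)\|=\|\ell(\lambda-1)\|$, together with convexity of $\lambda\mapsto\|\ell(\lambda)\|$ and the large value $\|\ell(0)\|>2-\delta$, should then locate a parameter with $\ell(\lambda)\in S_X$, $\xast(\ell(\lambda))=\lambda c>1-\alpha$, and $\|x-\ell(\lambda)\|$ close to $2$.

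I expect this containment-and-side issue to be the technical heart of the theorem. The genuine difficulty is that the maximiser $w$ of a single projection may arise from a point $z$ lying on the \emph{opposite} side of the slice (this already occurs in $\ell_2^2$, which is not SBP), so that the ball-interval $[\lambda_-,\lambda_+]$ can miss the range where $\ell(\lambda)$ is simultaneously in $B_X$ and in $S$. Overcoming it requires using that SBP furnishes bad projections for the functional $\xast$ \emph{aligned} with the given slice, and making a careful selection of $z$ (equivalently of $w$ and of $\lambda$) rather than taking an arbitrary near-maximiser; this is the step I would spend the most effort on, and I would fall back on a contradiction argument — assuming $\sup_{y\in S}\|x-y\|\le 2-\varepsilon_0$ and manufacturing from the resulting thinness of $S-x$ a projection with $\|I-P\|<2$ — if the direct extraction proves too fragile.
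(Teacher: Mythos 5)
The paper itself does not prove this theorem (it is quoted from Ivakhno--Kadets), so your attempt has to stand on its own. Your $(\Leftarrow)$ direction does: after normalising $Pz=\phi(z)u$ with $\|u\|=1$, $\phi(u)=1$, $M=\|\phi\|\ge 1$, the set $\{z\in B_X:\phi(z)>1-M\eta\}$ is a legitimate slice (it equals $S(B_X,\phi/M,\alpha)$ with $\alpha=1-1/M+\eta>0$) containing $u$, and the estimate $\|y-\phi(y)u\|\ge g(y)-\phi(y)g(u)\ge(1-\varepsilon)\bigl(1+\phi(y)\bigr)$ is exactly the right device for neutralising the fact that $M$ may be large. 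This half is correct and complete.

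The $(\Rightarrow)$ direction has a genuine gap, and it is not quite where you locate it. Everything you propose there --- the line $\ell(\lambda)=w+\lambda x$, a ``careful selection of $z$'', and the fallback contradiction --- extracts information only from the badness of the single projection $Pz=\frac{x^*(z)}{c}x$ attached to the given pair $(S,x)$, and no such argument can succeed: badness of that one projection is consistent with outright failure of the conclusion. Concretely, take $\mathbb{R}^2$ with $\|(a,b)\|=\max\{|a|,|b|,|a+b|\}$, $x^*(a,b)=a$, $x=(1/2,1/2)$ (so $c=1/2$), and $S=\{y\in B_X:y_1>2/5\}$. Then $Pz=(z_1,z_1)$ and $\|(I-P)z\|=|z_2-z_1|$ attains the value $2$ at $z=(1,-1)\in B_X$, so this projection is as bad as SBP demands; yet $\|x-y\|\le 8/5$ for every $y\in S$. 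In this configuration your ball-interval degenerates to the single point $\lambda=2$ and yields only $\|x-\ell(2)\|=3/2$. (By contrast, the side/membership issue you emphasise is harmless: after the sign flip, any $\lambda$ in the ball-interval satisfies $\lambda\ge\|w\|-1>1-\delta$, hence $x^*(\ell(\lambda))=\lambda c>1-\alpha$ automatically once $\delta$ is small.) The missing idea is to change the projection, because the defect of $x^*/c$ is that its norm $1/c$ may be far from $1$: pick $f\in S_{X^*}$ with $f(x)=1$ and set $\phi=\frac{\lambda x^*+(1-\lambda)f}{\lambda c+1-\lambda}$, so that $\phi(x)=1$, $\|\phi\|\le\frac{1}{1-\lambda(1-c)}$ is close to $1$ for small $\lambda$, and for $z\in B_X$ the inequality $\phi(z)>1-\delta$ forces $x^*(z)>c(1-\delta)-\delta(1-\lambda)/\lambda>1-\alpha$ once $\delta$ is small. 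Applying SBP to $Qz=\phi(z)x$ gives $z\in B_X$ with $\|z-\phi(z)x\|>2-\delta$, hence $|\phi(z)|>1-\delta$; after a sign flip $z\in S$ and $\|x-z\|\ge\|z-\phi(z)x\|-|1-\phi(z)|\ge 2-\delta-\max\{\delta,\|\phi\|-1\}\ge 2-\varepsilon$ for suitable $\lambda,\delta$. With $\|\phi\|$ near $1$, no line argument or convexity analysis is needed; without this tilt of the functional towards a norming functional of $x$, your plan cannot be completed.
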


It is clear that an SBP space has the LD2P in a strong sense since for any slice $S$ of the unit ball and any unit sphere element $x$ in $S$ there exists a point $y$ in $S$ almost diametral to $x$. Inspired by this the authors of \cite{Becerra2018a} called the equivalent formulation of an SBP space appearing in Theorem~\ref{thm:ik-sbp-dld2p} the \emph{diametral local diameter two property} (DLD2P in short). Note that the DLD2P has also appeared under the name of LD2P+ in \cite{Abrahamsen2016a}. From now on we will use the name DLD2P.

In \cite{Becerra2018a} also the diametral analogue of the D2P was introduced and studied: a Banach space $X$ is said to have the \emph{diametral diameter two property} (DD2P in short) if for every nonempty relatively weakly open subset $U$ of $B_X$, every unit sphere element $x\in U$, and every $\varepsilon>0$ there exists $y\in U$ such that $\|x-y\|\geq 2-\varepsilon$. From \cite[Lemma~3]{Shvydkoy2000} one has that the Daugavet property implies even the DD2P. An example of a Banach space with the DD2P and failing the Daugavet property was given in \cite[Example 2.2]{Becerra2018a}. The questions whether there is a space with the DLD2P and without the DD2P is still open (see \cite[Question 4.1]{Becerra2018a}).

We need also the very recent definition of a $\Delta$-point emerging from \cite{Abrahamsen2020a} before we can start to describe the contents of this paper.
In the language of slices, a $\Delta$-point is a unit sphere element $x$ such that every slice $S$ of the unit ball that contains $x$ also contains a point $y$ almost diametral to $x$.

Observe that a Banach space $X$ has the DLD2P if and only if every point in the unit sphere of $X$ is a $\Delta$-point (see Lemma \ref{lem: Delta point crit}). A Banach space $X$ has the \emph{convex diametral local diameter two property} (convex DLD2P in short) if its unit ball is the closed convex hull of its $\Delta$-points. It is clear that the DLD2P implies the convex DLD2P, however $\ell_\infty$ is an example of a space with the convex DLD2P and failing the DLD2P (see \cite[Corollary~5.4]{Abrahamsen2020a}). That the convex DLD2P implies the LD2P was shown in \cite[Proposition~5.2]{Abrahamsen2020a}. Finally, note that $c_0$ has the LD2P, but fails the convex DLD2P \cite[Remark~5.5]{Abrahamsen2020a}.

We summarize all of the aforementioned properties and their relations in the following diagram. 
\begin{center}

\begin{tikzpicture}[node distance=2cm]
  \node (Daugavet) [startstop] {Daugavet property};
  \node (DD2P) [io, right of=Daugavet, xshift=2cm] {DD2P};
  \node (DLD2P) [io, right of=DD2P, xshift=2cm] {DLD2P};
  \node (convexDLD2P) [io, right of=DLD2P, xshift=2cm] {convex DLD2P};
  \node (D2P) [io, below of=DD2P] {D2P};
  \node (LD2P) [io, below of=convexDLD2P] {LD2P};

  \draw [arrow] (Daugavet) --  (DD2P);
  \draw [arrow,transform canvas={yshift=0.7ex}] (DD2P) --  (DLD2P);
  \draw [arrow2,transform canvas={yshift=-0.7ex}] (DLD2P) -- node[anchor=north,] {?} (DD2P);
  \draw [arrow] (DLD2P) --  (convexDLD2P);
  \draw [arrow] (DD2P) --  (D2P);
  \draw [arrow] (D2P) -- (LD2P);
  \draw [arrow] (convexDLD2P) --  (LD2P);
  \end{tikzpicture}
\end{center}

The main aim of this paper is to study stability properties of diametral diameter two properties in the context of $F$-ideals, Köthe--Bochner spaces, and projective tensor products of Banach spaces. 

It is known that the Daugavet property passes down from the superspace to its $M$-ideals (see \cite{Kadets2000}). The Daugavet property lifts from an $M$-ideal to the superspace if the quotient space with respect to the given $M$-ideal also has the Daugavet property (see \cite{Kadets2000}). On the other hand, it is known that the LD2P and the D2P always lift from an $M$-ideal to the superspace (see \cite{Haller2014a}). In Section~\ref{sec: F-ideals}, we prove that the DD2P and the DLD2P pass down to a wide class of $F$-ideals, thus also to $M$-ideals (see Theorem~\ref{thm: DD2P F-ideals}) and that these properties lift from an $M$-ideal to the superspace whenever the quotient space also has the corresponding diametral diameter two property (see Proposition~\ref{prop: DD2P lifts}). However, the convex DLD2P behaves similarly to the regular diameter two property (see Proposition~\ref{prop: convex and M-ideal}).

Recall that if $X$ has the D2P, then so does $L_p(\mu,X)$ for every $1\leq p\leq \infty$ (see \cite{Acosta2015} and \cite{Becerra2006}). This result was recently extended by J.-D.~Hardtke who proved that all of the regular diameter two properties are even stable by forming the corresponding Köthe--Bochner space (see \cite{Hardtke2020}). For the Daugavet property it is known that $L_1(\mu,X)$ and $L_\infty(\mu, X)$ have the Daugavet property whenever $X$ has it or $\mu$ is atomless. However, if $1<p<\infty$, then $L_p(\mu,X)$ can never have the Daugavet property. We will show that all of the diametral diameter two properties are also stable by forming the corresponding Köthe--Bochner space (see Theorem~\ref{thm: Köthe-Bochner}), thus also for $L_p$-Bochner spaces (see Corollary~\ref{cor: DD2P in L_p}).

The question whether $X\pten Y$ has the Daugavet property whenever both $X$ and $Y$ have the Daugavet property (see \cite{Werner2001}) is open to this day. Very recently M.~Mart\'in and A.~Rueda Zoca made good progress towards this question by introducing a formally stronger version of the Daugavet property called the \textit{weak operator Daugavet property} (WODP in short) (see Definition~\ref{def: WODP}). They were able to prove that $X\pten Y$ has the Daugavet property whenever both $X$ and $Y$ have the WODP (see \cite{Martin2020}). We will prove that for the DLD2P of $X\pten Y$ it suffices to assume that $X$ has the WODP (see Proposition~\ref{prop: DLD2P of proj ten}). For the convex DLD2P of $X\pten Y$ it suffices to assume that $X$ has the convex DLD2P (see Proposition~\ref{prop: convex DLD2P and proj tensor product}). As a consequence we get that the Lipschitz-free space  $\mathcal{F}(M)$ has the convex DLD2P if and only if  $\mathcal{F}(M,X)$ has the convex DLD2P for any Banach space $X$ (see Corollary~\ref{cor: convexDLD2P in F(M)}).

 \section{Notation and preliminary results}\label{sec: preliminary}
 All Banach spaces considered in this paper are nontrivial and over
the real field. The closed unit ball of a Banach space $X$ is denoted
by $B_X$ and its unit sphere by $S_X$. The dual space of $X$ is
denoted by $X^\ast$ and the bidual by $X^{\ast\ast}$.
By a \emph{slice} of $B_X$ we mean a set of the form
\begin{equation*}
  S(B_X, x^*,\alpha) :=
  \{
  x \in B_X : x^*(x) > 1 - \alpha
  \},
\end{equation*}
where $x^* \in S_{X^*}$ and $\alpha > 0$. Notice that nonempty finite intersections of slices of the unit ball form a basis for the inherited weak topology of $B_X$. Therefore, in the definitions of the D2P and the DD2P, it suffices to consider nonempty finite intersections of slices instead of nonempty relatively weakly open subsets.
 
For $x\in S_X$ and $\varepsilon>0$ we denote
\[
\Delta_{\varepsilon}^X(x):=\{y\in B_X\colon \|x-y\|\geq 2-\varepsilon \}  
\]
and 
\[
\Delta_X =\{x\in S_X\colon x\in \overline{\text{conv}}\Delta_{\varepsilon}^X(x) \quad \text{for all $\varepsilon>0$} \}.  
\]

Recall from \cite{Abrahamsen2020a} that that an element $x\in S_X$ is said to be a \emph{$\Delta$-point} if $x\in \overline{\text{conv}} \Delta_{\varepsilon}^X(x)$ for every $\varepsilon>0$. Note that $\Delta$-points can be also characterized in terms of slices.

\begin{lem}[see {\cite[Lemma 2.1]{Abrahamsen2020a}}]
\label{lem: Delta point crit}
  Let $X$ be a Banach space and $x \in S_X$.
  The following assertions are equivalent:
  \begin{enumerate}
  \item[$(i)$]\label{aaa}
    $x$ is a $\Delta$-point;
  \item[$(ii)$]\label{bbb}
    for every slice $S$ of $B_X$ with $x\in S$ and every
    $\varepsilon > 0$ there exists $y \in S$
    such that $\|x - y\| \geq 2 - \varepsilon$.
  \end{enumerate}
\end{lem}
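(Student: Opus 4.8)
The plan is to prove the equivalence of the two characterizations of a $\Delta$-point by unwinding the definition of $\overline{\text{conv}}\,\Delta_\varepsilon^X(x)$ and invoking a separation argument. The key observation is that the geometric condition (ii) about slices is really a restatement of membership in the closed convex hull via the Hahn--Banach theorem.

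\smallskip

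For the implication $(i)\Rightarrow(ii)$, I would fix a slice $S=S(B_X,x^*,\alpha)$ with $x\in S$ and fix $\varepsilon>0$. Since $x\in S$ we have $x^*(x)>1-\alpha$, so we may choose $\delta>0$ with $x^*(x)>1-\alpha+\delta$. The assumption $x\in\overline{\text{conv}}\,\Delta_{\varepsilon'}^X(x)$ (for a suitably chosen $\varepsilon'\le\varepsilon$) lets me write $x$ as a limit of convex combinations of points in $\Delta_{\varepsilon'}^X(x)$. Applying $x^*$ and using that $x^*(x)>1-\alpha+\delta$, at least one of the points $y$ appearing in such a convex combination must satisfy $x^*(y)>1-\alpha$, i.e.\ $y\in S$; and that $y$ already satisfies $\|x-y\|\ge 2-\varepsilon'\ge 2-\varepsilon$. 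The only care needed is to track the approximation errors so that the averaging argument forces at least one term into the slice.

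\smallskip

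For the converse $(ii)\Rightarrow(i)$, I would argue by contradiction using Hahn--Banach separation. Fix $\varepsilon>0$ and suppose $x\notin\overline{\text{conv}}\,\Delta_\varepsilon^X(x)$. Since the latter is a nonempty closed convex set (it is nonempty because $-x\in\Delta_\varepsilon^X(x)$, as $\|x-(-x)\|=2$), there is a functional $x^*\in S_{X^*}$ and a real $\beta$ strictly separating $x$ from the set, say $x^*(x)>\beta\ge \sup\{x^*(z):z\in\overline{\text{conv}}\,\Delta_\varepsilon^X(x)\}$. Setting $\alpha:=1-\beta$ (after normalizing so that $x^*(x)>1-\alpha$) produces a slice $S=S(B_X,x^*,\alpha)$ containing $x$ but disjoint from $\Delta_\varepsilon^X(x)$, so no $y\in S$ satisfies $\|x-y\|\ge 2-\varepsilon$, contradicting (ii).

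\smallskip

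I expect the main obstacle to be the bookkeeping in $(i)\Rightarrow(ii)$: one must be slightly careful that the definition of $\Delta$-point gives membership in $\overline{\text{conv}}\,\Delta_\varepsilon^X(x)$ for \emph{every} $\varepsilon$, so that the convex-combination approximation can be taken arbitrarily close to $x$ while keeping the diametral distance at least $2-\varepsilon$. Matching the slice width $\alpha$ against the approximation error and the gap $\delta=x^*(x)-(1-\alpha)$ is the delicate step, but it is entirely routine once the quantifiers are ordered correctly. The separation direction $(ii)\Rightarrow(i)$ is conceptually clean and should present no difficulty beyond verifying nonemptiness and closedness of the separating set.
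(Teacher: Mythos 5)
Your proposal is correct. Note that the paper does not prove this lemma at all: it is quoted from the reference {\cite[Lemma~2.1]{Abrahamsen2020a}}, and the argument given there is essentially the one you reconstruct, so you have filled in the cited proof rather than diverged from it. Both halves of your argument are sound: in $(i)\Rightarrow(ii)$ the averaging step works because if $x^*\bigl(\sum_i \lambda_i y_i\bigr) > 1-\alpha$ for a convex combination, then some $y_i$ satisfies $x^*(y_i) > 1-\alpha$ (and you may take $\varepsilon'=\varepsilon$ directly, no smaller value is needed); in $(ii)\Rightarrow(i)$ the separation step works because $\Delta_\varepsilon^X(x)$ is nonempty (it contains $-x$) and its closed convex hull is a closed convex set, so strict Hahn--Banach separation from the point $x$ yields, after normalizing the functional, a slice containing $x$ and disjoint from $\Delta_\varepsilon^X(x)$, contradicting $(ii)$.
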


Thus, by Lemma~\ref{lem: Delta point crit}, it is clear that a Banach space $X$ has the DLD2P if and only if $S_X=\Delta_X$. On the other hand, since the convex DLD2P of $X$ is defined by the fact that $B_X=\overline{\text{conv}} \Delta_X$, then by a Hahn--Banach separation argument one has the following.

\begin{lem}\label{lem: convex DLD2P crit}
  Let $X$ be a Banach space. Then $X$ has the convex DLD2P if and only if $S\cap \Delta_X\neq \emptyset$ for every slice $S$ of $B_X$.
\end{lem}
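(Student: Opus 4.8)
The plan is to prove the equivalence via Hahn--Banach separation, as the statement itself suggests. Recall that the convex DLD2P is \emph{defined} by the condition $B_X = \clco \Delta_X$, so the real content is to translate this closed-convex-hull equality into the slice condition $S \cap \Delta_X \neq \emptyset$ for every slice $S$ of $B_X$.

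\medskip

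\emph{Backward direction (slices condition $\Rightarrow$ convex DLD2P).} Assume every slice of $B_X$ meets $\Delta_X$. I want to show $B_X = \clco \Delta_X$. The inclusion $\clco \Delta_X \subseteq B_X$ is immediate since $\Delta_X \subseteq S_X \subseteq B_X$ and $B_X$ is closed and convex. For the reverse inclusion I argue by contradiction: suppose there is some $x_0 \in B_X \setminus \clco \Delta_X$. Since $\clco \Delta_X$ is a closed convex set, the Hahn--Banach separation theorem produces a functional $x^* \in S_{X^*}$ and a real number $\alpha$ with $x^*(x_0) > \alpha \geq \sup_{y \in \clco \Delta_X} x^*(y) \geq \sup_{y \in \Delta_X} x^*(y)$. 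The point is then that the slice $S(B_X, x^*, 1 - \alpha) = \{x \in B_X : x^*(x) > \alpha\}$ is nonempty (it contains $x_0$) yet is disjoint from $\Delta_X$, contradicting the hypothesis. One technical nuisance to handle is normalising the separating functional to lie on $S_{X^*}$ and checking that the resulting threshold genuinely yields a \emph{slice} in the sense defined in Section~\ref{sec: preliminary} (i.e., with $\alpha > 0$); if $\alpha \geq 1$ the slice would be empty, but since $x^*(x_0) > \alpha$ and $x_0 \in B_X$ forces $\alpha < 1$, this is fine.

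\medskip

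\emph{Forward direction (convex DLD2P $\Rightarrow$ slices condition).} Assume $B_X = \clco \Delta_X$ and let $S = S(B_X, x^*, \varepsilon)$ be an arbitrary slice. Pick any $x_0 \in S$, so $x^*(x_0) > 1 - \varepsilon$. Since $x_0 \in B_X = \clco \Delta_X$, I can approximate $x_0$ by convex combinations of elements of $\Delta_X$: there is a convex combination $\sum_i \lambda_i y_i$ with $y_i \in \Delta_X$ that is close enough to $x_0$ that $x^*\!\left(\sum_i \lambda_i y_i\right) > 1 - \varepsilon$. Then $\sum_i \lambda_i x^*(y_i) > 1 - \varepsilon$, which forces at least one index $j$ with $x^*(y_j) > 1 - \varepsilon$, i.e.\ $y_j \in S \cap \Delta_X$. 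Hence the slice meets $\Delta_X$.

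\medskip

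The main (and really only) obstacle is the bookkeeping around the separation theorem in the backward direction: ensuring the separating functional is normalised to $S_{X^*}$, that the strict inequality is preserved, and that the resulting threshold corresponds to an admissible slice parameter $\alpha > 0$ matching the definition given in the preliminaries. The forward direction is a routine convex-combination argument. Neither step requires anything beyond the definitions and standard Hahn--Banach separation, so the proof is short; the statement is essentially a reformulation lemma packaging the definition into the more usable slice form that later results (e.g.\ Lemma~\ref{lem: convex DLD2P crit} is invoked in the tensor-product and $M$-ideal sections) can apply directly.
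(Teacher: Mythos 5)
Your proof is correct and follows exactly the route the paper intends: the paper offers no written proof beyond the remark that the lemma follows ``by a Hahn--Banach separation argument'' from the definition $B_X=\clco\Delta_X$, and your two directions (separation to produce a $\Delta_X$-avoiding slice, and the routine convex-combination argument for the converse) are precisely that argument spelled out, with the normalisation and admissibility of the slice parameter handled correctly.
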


We recall that a norm $F$ on $\mathbb R^2$ is called \emph{absolute} (see \cite{Bonsall1973}) if
\[
F(a,b)=F(|a|,|b|)\qquad\text{for all $(a,b)\in\mathbb R^2$}
\]
and \emph{normalized} if
\[
F(1,0)=F(0,1)=1.
\]
For example, the $\ell_p$-norm $\|\cdot\|_p$ is absolute and normalized for every $p\in[1,\infty]$. If $F$ is an absolute normalized norm on $\mathbb R^2$ (see \cite[Lemmata~21.1 and 21.2]{Bonsall1973}), then 

\begin{itemize}
\item $\|(a,b)\|_\infty\leq F(a,b)\leq \|(a,b)\|_1$ for all $(a,b)\in\mathbb R^2$;
\item if $(a,b),(c,d)\in\mathbb R^2$ with $|a|\leq |c|\quad \text{and}\quad |b|\leq |d|,$ then \[F(a,b)\leq F(c,d);\]
\item the dual norm $F^\ast$ on $\mathbb R^2$ defined by 
\[
F^\ast(c,d)=\max_{F(a,b)\leq 1}(|ac|+|bd|) \qquad\text{for all $(c,d)\in\mathbb R^2$}
\]
is also absolute and normalized. Note that $(F^\ast)^\ast=F$.
\end{itemize}

If $X$ and $Y$ are Banach spaces and $F$ is an absolute normalized norm on $\mathbb R^2$, then we denote by $X\oplus_F Y$ the product space $X\times Y$ with respect to the norm
\[
\|(x,y)\|_F=F(\|x\|,\|y\|) \qquad\text{for all $x\in X$ and $y\in Y$}.
\]
In the special case where $F$ is the $\ell_p$-norm, we write $X\oplus_p Y$.
Note that $(X\oplus_F Y)^\ast=X^\ast\oplus_{F^\ast} Y^\ast$.

The following lemma is easily verified from the definitions.

\begin{lem}\label{lemma: extreme point is an strongly exposed point}
	Let $F$ be an absolute normalized norm on $\mathbb{R}^{2}$ such that $(1,0)$ is an extreme point of the unit ball $B_{(\mathbb{R}^2, F)}$. Then $(1,0)$ is a strongly exposed point of $B_{(\mathbb{R}^2, F)}$, which is strongly exposed by the functional $(1,0)\in B_{(\mathbb{R}^2, F^\ast)} $. In particular, for every $\varepsilon>0$ there is a $\gamma>0$ such that, whenever $(a,b)\in B_{(\mathbb{R}^2, F)}$ and $a>1-\gamma$, then $|b|<\varepsilon$.
\end{lem}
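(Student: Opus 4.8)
The plan is to show directly that the functional $(1,0)\in B_{(\mathbb R^2,F^\ast)}$ strongly exposes the point $(1,0)$, and to observe that the ``in particular'' clause is merely a quantified reformulation of this. I would begin by recording that $(1,0)$ is a norming functional for $(1,0)$. Using the first itemized property of an absolute normalized norm, for every $(a,b)\in B_{(\mathbb R^2,F)}$ one has $a\le|a|\le\|(a,b)\|_\infty\le F(a,b)\le 1$, so the functional $(a,b)\mapsto a$ attains its maximum value $1$ over $B_{(\mathbb R^2,F)}$ at the point $(1,0)$; in particular $F^\ast(1,0)=1$, so $(1,0)\in S_{(\mathbb R^2,F^\ast)}$. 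Note that this norming property is weaker than exposition, and a priori the maximum could also be attained elsewhere on the sphere; ruling this out quantitatively is exactly what the extremality hypothesis will buy.

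Next I would reduce strong exposition to the final assertion. Combining the norming property with the $\varepsilon$--$\gamma$ statement gives strong exposition as follows: if $(a_n,b_n)\in B_{(\mathbb R^2,F)}$ is any maximizing sequence, i.e. $a_n\to 1=\sup_{B_{(\mathbb R^2,F)}}(a,b)\mapsto a$, then for large $n$ we have $a_n>1-\gamma$, whence $|b_n|<\varepsilon$; thus $b_n\to 0$ and $(a_n,b_n)\to(1,0)$. Conversely the quantified statement is clearly contained in strong exposition, since on the slice $\{(a,b)\in B_{(\mathbb R^2,F)}:a>1-\gamma\}$ the first coordinate already lies in $(1-\gamma,1]$, so controlling $|b|$ controls the whole slice. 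Hence it suffices to prove the last assertion.

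I would prove it by contradiction. Assuming it fails for some $\varepsilon>0$, for each $n$ there is $(a_n,b_n)\in B_{(\mathbb R^2,F)}$ with $a_n>1-1/n$ and $|b_n|\ge\varepsilon$. By the inequality above $a_n\le 1$, so $a_n\to 1$, and since $B_{(\mathbb R^2,F)}$ is compact I may pass to a subsequence along which $(a_n,b_n)\to(1,c)$ with $|c|\ge\varepsilon>0$; as the unit ball is closed, $(1,c)\in B_{(\mathbb R^2,F)}$, and $\|(1,c)\|_\infty\le F(1,c)\le 1$ forces $F(1,c)=1$. The decisive step is now to invoke absoluteness: $F(1,-c)=F(1,|c|)=F(1,c)=1$, so both $(1,c)$ and $(1,-c)$ lie in $B_{(\mathbb R^2,F)}$; their midpoint is $(1,0)$, and since $c\ne 0$ they are distinct, contradicting that $(1,0)$ is an extreme point of $B_{(\mathbb R^2,F)}$.

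The only genuinely nontrivial ingredient is this last symmetrisation. The hypothesis supplies extremality, a statement about $(1,0)$ not being a midpoint, while the absoluteness of $F$ is precisely what manufactures the symmetric partner $(1,-c)$ needed to realise $(1,0)$ as such a midpoint. Everything else is compactness together with the two standard inequalities for absolute normalized norms recorded in the excerpt, so I anticipate no serious difficulty beyond carefully bookkeeping the equivalence between the strong-exposition formulation and the quantified $\varepsilon$--$\gamma$ reformulation.
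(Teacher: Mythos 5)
Your proof is correct. Note that the paper offers no argument at all for this lemma (it is dismissed as ``easily verified from the definitions''), so your write-up supplies exactly the verification that was left implicit. The decisive step is the one you flag yourself: a limit point $(1,c)$ of a hypothetical bad sequence lies on the unit sphere, and absoluteness manufactures the symmetric partner $(1,-c)$, exhibiting $(1,0)=\tfrac12\bigl((1,c)+(1,-c)\bigr)$ as a midpoint of two distinct ball points, contradicting extremality. This is genuinely needed -- in a general planar convex body an extreme point need not even be exposed, so the lemma is not pure convex-geometry boilerplate; it is the symmetry of $F$ that does the work. Your remaining steps (the chain $a\le\|(a,b)\|_\infty\le F(a,b)\le 1$ showing the functional norms $(1,0)$, the compactness of $B_{(\mathbb{R}^2,F)}$ upgrading the contradiction argument, and the equivalence between strong exposedness and the quantified $\varepsilon$--$\gamma$ formulation) are all standard and correctly executed, so nothing is missing.
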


Finally, given two Banach spaces $X$ and $Y$, we will denote by $X\pten Y$ the projective tensor product of $X$ and $Y$. Recall that the space $\mathcal{L}(Y,X^\ast)$ of bounded linear operators from $Y$ to $X^\ast$ is linearly isometric to the topological dual of $X\pten Y$. We refer to \cite{Ryan2002} for a detailed treatment and applications of tensor products.

\section{$F$-ideals}\label{sec: F-ideals}

We denote the \emph{annihilator} of a subspace $Y$ of a Banach space $X$ by
\[
Y^{\perp}=\{\xast\in \Xast \colon \xast(y)=0\quad \text{for all } y\in Y\}.
\]

According to the terminology in \cite{Harmand1993}, a closed subspace $Y\subset X$ is called an \emph{M-ideal} if there exists a projection $P$ on $\Xast$ with $\ker P=Y^{\perp}$ and
\[
\n{\xast}=\n{P\xast}+\n{\xast-P\xast}\qquad \text{for all } \xast\in \Xast.
\]
Thus, if $Y$ is an $M$-ideal in $X$, then $\Xast$ decomposes as $\Xast=\Yast\oplus_1 Y^\perp$.

In \cite{Kadets2000}, the heritance of the Daugavet property with respect to $M$-ideals was studied and the authors obtained the following result. 
\begin{thm}[see {\cite[Propositions~2.10 and 2.11]{Kadets2000}}]\label{thm: Daugavet M-ideal}
  Let $X$ be a Banach space and $Y$ an $M$-ideal in $X$. 
  \begin{itemize}
      \item[$(a)$] If $X$ has the Daugavet property, then $Y$ has the Daugavet property;
      \item[$(b)$] If $Y$ and $X/Y$ share the Daugavet property, then $X$ has the Daugavet property.
  \end{itemize}
\end{thm}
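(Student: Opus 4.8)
The plan is to derive both parts from the slice characterisation of the Daugavet property in Lemma~\ref{lem: DP slices}, feeding in a single structural fact about $M$-ideals. Dualising the isometric decomposition $\Xast=\Yast\oplus_1 Y^\perp$ noted above, and using $Y^\perp=(X/Y)^\ast$, one obtains the $M$-summand decomposition
\[
\Xastast=Y^{\ast\ast}\oplus_\infty (X/Y)^{\ast\ast},\qquad \|(\xi,\zeta)\|=\max\{\|\xi\|,\|\zeta\|\},
\]
with $M$-projection $\pi=P^\ast$ onto $Y^{\ast\ast}$, where $P\colon\Xast\to\Xast$ is the $L$-projection with range $\Yast$ and $\ker P=Y^\perp$. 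A short pairing computation shows that for $x\in X\subseteq\Xastast$ the first coordinate is $\pi x\in Y^{\ast\ast}$ and the second coordinate is $qx$, where $q\colon X\to X/Y$ is the quotient map. In both parts I will first manufacture an almost diametral point at the bidual level and then push it back into the original unit ball by Goldstine's theorem, crucially using that $\psi\mapsto\|x_0-\psi\|$ is weak$^\ast$-lower semicontinuous, so that large distances are \emph{preserved} in the limit while a strict slice inequality, being weak$^\ast$-open, survives.

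For $(a)$ I would fix $y_0\in S_Y$, a slice $S(B_Y,y^\ast,\alpha)$ with $y^\ast\in S_{\Yast}$, and $\varepsilon>0$. Regard $y^\ast$ as the norm-preserving extension $\xast\in PX^\ast\subseteq\Xast$ furnished by the $M$-ideal structure, so that $\xast$ pairs to zero with the second coordinate of any element of $\Xastast$. Applying the Daugavet property of $X$ (Lemma~\ref{lem: DP slices}) to $y_0\in S_X$, the slice $S(B_X,\xast,\alpha/2)$, and $\varepsilon/2$ yields $w\in B_X$ with $\xast(w)>1-\alpha/2$ and $\|y_0-w\|\ge 2-\varepsilon/2$. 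Setting $\eta=\pi w\in B_{Y^{\ast\ast}}$, the vanishing of $\xast$ on the second coordinate gives $\langle y^\ast,\eta\rangle=\xast(w)>1-\alpha/2$; and since $y_0-w=(y_0-\eta,\,-(I-\pi)w)$ in the $\ell_\infty$-decomposition while $\|(I-\pi)w\|\le 1<2-\varepsilon/2$, the defining maximum of $\|y_0-w\|$ is attained in the first coordinate, so $\|y_0-\eta\|_{Y^{\ast\ast}}\ge 2-\varepsilon/2$. Finally, by Goldstine's theorem $B_Y$ is weak$^\ast$-dense in $B_{Y^{\ast\ast}}$, and the weak$^\ast$-open set $\{\xi:\langle y^\ast,\xi\rangle>1-\alpha/2\}\cap\{\xi:\|y_0-\xi\|>2-\varepsilon\}$ contains $\eta$, hence meets $B_Y$ in a point $z\in S(B_Y,y^\ast,\alpha)$ with $\|y_0-z\|\ge 2-\varepsilon$.

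For $(b)$ I would fix $x_0\in S_X$, a slice $S(B_X,\xast,\alpha)$ with $\xast\in S_{\Xast}$, and $\varepsilon>0$, and split $\xast=y^\ast+z^\ast$ with $y^\ast\in\Yast$, $z^\ast\in Y^\perp$ and $\|y^\ast\|+\|z^\ast\|=1$. Writing $x_0=(\xi_0,\zeta_0)\in Y^{\ast\ast}\oplus_\infty(X/Y)^{\ast\ast}$ with $\zeta_0=qx_0$, the equality $\|x_0\|=1$ forces $\|\xi_0\|=1$ or $\|\zeta_0\|=1$. Assume $\|\xi_0\|=1$ (the other case is symmetric, exchanging $Y$ and $X/Y$). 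Since the Daugavet property passes to dual spaces, both $Y^{\ast\ast}$ and $(X/Y)^{\ast\ast}$ enjoy it; so Lemma~\ref{lem: DP slices} applied in $Y^{\ast\ast}$ at $\xi_0\in S_{Y^{\ast\ast}}$ produces $\xi\in B_{Y^{\ast\ast}}$ with $\|\xi_0-\xi\|\ge 2-\varepsilon/2$ and, when $y^\ast\neq0$, also $\langle y^\ast,\xi\rangle>\|y^\ast\|(1-\alpha/2)$ (using the slice given by $y^\ast/\|y^\ast\|$); choosing $\zeta\in B_{(X/Y)^{\ast\ast}}$ with $\langle z^\ast,\zeta\rangle>\|z^\ast\|(1-\alpha/2)$ when $z^\ast\neq0$ and putting $\phi=(\xi,\zeta)\in B_{\Xastast}$ gives $\xast(\phi)=\langle y^\ast,\xi\rangle+\langle z^\ast,\zeta\rangle>1-\alpha/2$ and $\|x_0-\phi\|\ge\|\xi_0-\xi\|\ge 2-\varepsilon/2$. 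Exactly as in $(a)$, Goldstine's theorem together with the weak$^\ast$-lower semicontinuity of the norm yields $w\in B_X$ with $\xast(w)>1-\alpha$ and $\|x_0-w\|\ge 2-\varepsilon$, so $X$ has the Daugavet property.

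The main obstacle is organisational rather than computational: one must set up the bidual $M$-summand decomposition and the identifications of $\pi x$ and $qx$ correctly, and then invoke the weak$^\ast$-lower semicontinuity of the norm in the \emph{favourable} direction, ensuring that the nearly diametral distances produced in $Y^{\ast\ast}$ (respectively $\Xastast$) are not lost when passing back to $B_Y$ (respectively $B_X$), while the strict slice inequalities persist because they cut out weak$^\ast$-open sets. The only input beyond Lemma~\ref{lem: DP slices} is that the Daugavet property passes to the dual (used in $(b)$); alternatively one can avoid biduals and glue a $Y$-part with a lift of a quotient-part directly in $X$ through the $3$-ball property characterising $M$-ideals, at the cost of a more delicate approximation argument.
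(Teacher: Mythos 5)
Your part $(a)$ is correct. It is a genuinely different route from the one in \cite{Kadets2000} (and from the one this paper uses for the analogous Theorem~\ref{thm: DD2P F-ideals}): instead of exploiting the $\sigma(X,Y^\ast)$-density of $B_Y$ in $B_X$ and correcting functionals with the $M$-ideal projection $P$, you pass to the $M$-summand decomposition $X^{\ast\ast}=Y^{\ast\ast}\oplus_\infty (X/Y)^{\ast\ast}$, manufacture the almost diametral point there, and pull it back by Goldstine plus weak$^\ast$-lower semicontinuity of the norm. All the steps check out: $x^\ast\in\operatorname{ran}P$ annihilates the second coordinate, $\pi y_0=y_0$, the $\ell_\infty$-maximum forces the distance into the first coordinate, and both conditions defining your target set are weak$^\ast$-open, so Goldstine applies.

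Part $(b)$, however, contains a genuine gap: the assertion that ``the Daugavet property passes to dual spaces'', which you use to endow $Y^{\ast\ast}$ and $(X/Y)^{\ast\ast}$ with the Daugavet property, is false. The true implication goes the other way: if $X^\ast$ has the Daugavet property then so does $X$ (adjoints of rank-one operators on $X$ are rank-one on $X^\ast$), but not conversely. Concretely, $C[0,1]$ has the Daugavet property while $C[0,1]^\ast=M[0,1]$ does not: the rank-one projection $P\mu=\mu(\{0\})\delta_0$ removes an atom, so $\|I-P\|\le 1<2$. The bidual fails as well: writing $M[0,1]=M_c[0,1]\oplus_1\ell_1([0,1])$ gives $C[0,1]^{\ast\ast}=M_c[0,1]^\ast\oplus_\infty\ell_\infty([0,1])$, and $\ell_\infty([0,1])$ fails the Daugavet property (again kill one coordinate by a rank-one projection); since $\ell_\infty([0,1])$ is an $M$-summand, part $(a)$ shows $C[0,1]^{\ast\ast}$ cannot have the Daugavet property either. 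The gap is not cosmetic: what your argument actually requires is that for every $\xi_0\in S_{Y^{\ast\ast}}$ and every \emph{weak$^\ast$} slice of $B_{Y^{\ast\ast}}$ there is a point of the slice almost diametral to $\xi_0$, and by the weak$^\ast$-slice characterization in \cite{Kadets2000} this is precisely the Daugavet property of $Y^\ast$ --- strictly stronger than your hypothesis on $Y$. So $(b)$ has to be proved inside $X$, as in \cite[Proposition~2.11]{Kadets2000}: split $x^\ast=y^\ast+z^\ast$ with $y^\ast\in\operatorname{ran}P$, $z^\ast\in Y^\perp$, use the Daugavet property of $X/Y$ on the quotient slice determined by $z^\ast$ and lift almost isometrically, then correct the $Y$-part using the Daugavet property of $Y$ together with the $M$-ideal machinery (the 3-ball property, or \cite[Proposition~2.3]{Werner1994} as in the paper's Proposition~\ref{prop: convex and M-ideal}). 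Your closing remark gestures at exactly this alternative, but it is not carried out, and it is the only one of your two routes for $(b)$ that can work.
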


Therefore, the Daugavet property passes down from the superspace to its $M$-ideals. However, the converse is false in general. Indeed, $C[0,1]$ is an $M$-ideal in $C[0,1]\oplus_\infty \ell_2$, but $C[0,1]\oplus_\infty \ell_2$ even fails the DLD2P (see \cite[Theorem~3.2]{Ivakhno2004}).

On the other hand, it is known that the regular diameter two properties always lift from an $M$-ideal to the superspace (see \cite{Haller2014a}). Our aim in this section is to show that the diametral diameter two properties behave similarly to the Daugavet property, while the convex DLD2P resembles the regular diameter two properties with respect to $M$-ideals.

We will now consider more general types of ideals (see \cite[pp.~45--46]{Harmand1993}). Let $F$ be an absolute and normalized norm on $\mathbb R^2$. A closed subspace $Y\subset X$ is called an \emph{F-ideal} if there exists a projection $P$ on $\Xast$ with $\ker P=Y^{\perp}$ and
\[
\n{\xast}=F^\ast(\n{P\xast},\n{\xast-P\xast})\qquad \text{for all } \xast\in \Xast.
\] 

Therefore, $M$-ideals are just the $\|\cdot\|_\infty$-ideals. We are now ready to prove our main result in this section.

\begin{thm}\label{thm: DD2P F-ideals}
  Let $X$ be a Banach space, $F$ be an absolute normalized norm on $\mathbb R^2$ such that $(1,0)$ is an extreme point of $B_{(\mathbb R^2,F^\ast)}$, and $Y$ an $F$-ideal in $X$. 
  \begin{itemize}
      \item[$(a)$] If $X$ has the DD2P, then $Y$ has the DD2P;
      \item[$(b)$] If $X$ has the DLD2P, then $Y$ has the DLD2P.
    \end{itemize}
  \end{thm}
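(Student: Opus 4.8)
The plan is to transfer the near-diametral pairs guaranteed by the property of $X$ down to $Y$, using two auxiliary facts about the $F$-ideal structure. First I would record that the restriction map $\ran P\to Y^\ast$, $\phi\mapsto\phi|_Y$, is an onto isometry: for $\phi\in\ran P$ one has $\|\phi\|_{X^\ast}=F^\ast(\|\phi\|,0)=\|\phi\|$, while applying $P$ to a Hahn--Banach extension of any $y^\ast\in S_{Y^\ast}$ yields a preimage of the same norm (here $\|P\|\le 1$ since $F^\ast(a,b)\ge\max(|a|,|b|)$). In particular every $y^\ast\in S_{Y^\ast}$ has a norm-one extension $x^\ast\in\ran P$. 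Second, and crucially, I would prove a \emph{matching lemma}: for every $x_1\in B_X$, every finite family $\phi_1,\dots,\phi_m\in\ran P$, and every $\rho>0$ there is $y\in B_Y$ with $|\phi_j(y)-\phi_j(x_1)|<\rho$ for all $j$. This follows from a finite-dimensional separation argument: if no such $y$ existed, the convex set $\{(\phi_j(y))_j:y\in B_Y\}\subseteq\mathbb R^m$ could be separated from $(\phi_j(x_1))_j$ by some $c\in\mathbb R^m$, and then $\phi:=\sum_j c_j\phi_j\in\ran P$ would satisfy $\phi(x_1)>\sup_{y\in B_Y}\phi(y)=\|\phi|_Y\|=\|\phi\|_{X^\ast}$, contradicting $\|x_1\|\le 1$.

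With these in hand I would prove (b) as follows. Fix $y_0\in S_Y$, a slice $S(B_Y,y^\ast,\alpha)\ni y_0$ with $y^\ast\in S_{Y^\ast}$, and $\varepsilon>0$; by Lemma~\ref{lem: Delta point crit} it suffices to find $z$ in this slice with $\|y_0-z\|\ge 2-\varepsilon$. Let $x^\ast\in\ran P$ be the norm-one extension of $y^\ast$; since $x^\ast(y_0)=y^\ast(y_0)>1-\alpha$, the point $y_0$ lies in $S(B_X,x^\ast,\beta)$ for some $\beta<\alpha$. As $X$ has the DLD2P, there is $x_1\in S(B_X,x^\ast,\beta)$ with $\|y_0-x_1\|\ge 2-\delta$; choose $\psi\in S_{X^\ast}$ with $\psi(y_0-x_1)=\|y_0-x_1\|\ge 2-\delta$, which forces $\psi(y_0)\ge 1-\delta$ and $\psi(x_1)\le -1+\delta$. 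Writing $\psi=P\psi+(\psi-P\psi)$ with $\psi-P\psi\in Y^\perp$, the isometry gives $\|P\psi\|=\|\psi|_Y\|\ge\psi(y_0)\ge 1-\delta$. Now I would apply Lemma~\ref{lemma: extreme point is an strongly exposed point} to the norm $F^\ast$ (legitimate since $(1,0)$ is extreme in $B_{(\mathbb R^2,F^\ast)}$ and $(F^\ast)^\ast=F$): choosing $\delta$ small makes $\|\psi-P\psi\|$ as small as desired, say $<\eta$. Finally, by the matching lemma pick $y\in B_Y$ with $|x^\ast(y)-x^\ast(x_1)|<\rho$ and $|(P\psi)(y)-(P\psi)(x_1)|<\rho$. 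The first estimate keeps $y$ inside $S(B_Y,y^\ast,\alpha)$ (as $x^\ast|_Y=y^\ast$ and $\beta+\rho<\alpha$), while, since $y_0,y\in Y$ and $\psi-P\psi\in Y^\perp$,
\[
\|y_0-y\|\ge\frac{(P\psi)(y_0-y)}{\|P\psi\|}\ge (P\psi)(y_0)-(P\psi)(y)=\psi(y_0)-(P\psi)(y)\ge 2-2\delta-\eta-\rho,
\]
which is $\ge 2-\varepsilon$ for suitable parameters. Part (a) is entirely analogous: by the remark in Section~\ref{sec: preliminary} one may replace the relatively weakly open set by a finite intersection of slices $\bigcap_i S(B_Y,y_i^\ast,\alpha_i)$, lift each $y_i^\ast$ to $\ran P$, invoke the DD2P of $X$, and run the same argument while matching $x_1$ simultaneously on the finitely many functionals $x_1^\ast,\dots,x_n^\ast,P\psi$.

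The step I expect to be the main obstacle — and the reason the extreme-point hypothesis is needed — is that one cannot simply approximate the $X$-diametral point $x_1$ by a nearby element of $Y$: for an $M$-ideal the bidual splits as $Y^{\ast\ast}\oplus_\infty(X/Y)^{\ast\ast}$, so $x_1$ may sit at distance close to $1$ from $Y$ (witness $C[0,1]\subset C[0,1]\oplus_\infty\ell_2$). The resolution is to transfer diametrality through the \emph{norming functional} $\psi$ rather than through $x_1$ itself: the condition that $(1,0)$ be extreme in $B_{(\mathbb R^2,F^\ast)}$ is precisely what forces $\psi$ to be almost supported on $\ran P\cong Y^\ast$ (i.e.\ $\|\psi-P\psi\|$ small), and the matching lemma then lets us realise the same near-diametral behaviour inside $B_Y$. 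Checking that the selected $y$ simultaneously remains in the prescribed slice(s) and stays near-diametral to $y_0$ — that is, the bookkeeping of $\delta,\eta,\rho,\beta$ against $\alpha$ and $\varepsilon$ — is then routine.
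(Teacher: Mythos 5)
Your proposal is correct and follows essentially the same route as the paper's own proof: lift the slice functionals into $\ran P$, apply the diametral property of $X$ there, take a norming functional $\psi$ for the resulting diametral pair, use Lemma~\ref{lemma: extreme point is an strongly exposed point} applied to $F^\ast$ to force $\|\psi-P\psi\|$ to be small, and pull the diametral point back into $B_Y$ via the $\sigma(X,\ran P)$-density of $B_Y$ in $B_X$, checking that it stays in the slices. The only real difference is that where the paper invokes the analogue of \cite[Remark~I.1.13]{Harmand1993} for that density, you prove it directly with your separation-based matching lemma (together with the explicit isometry $\ran P\cong Y^\ast$), which makes the argument self-contained.
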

\begin{proof}
    $(a)$ By our assumptions $X^\ast=Y^\ast \oplus_{F^\ast} Y^\perp$. Let $P\colon \Xast \to \Xast$ with $\ker P = Y^\perp$ be the $F$-ideal projection. Let $\varepsilon>0$,  $y_i^\ast \in S_{Y^\ast}$, and $y\in \bigcap_{i=1}^n S(B_Y, y_i^\ast, \varepsilon)$ with $\|y\|=1$. Our aim is to show that there exists $u\in \bigcap_{i=1}^n S(B_Y, y_i^\ast, \varepsilon)$ such that $\|y-u\|\geq 2-\varepsilon$. 
    
    Pick $\delta>0$ such that $y_i^\ast(y)>1-\varepsilon+\delta$ for every $i\in \{1,\dots,n\}$. Consider the slices $S(B_X, Px_i^\ast, \varepsilon-\delta)$, where $x_i^\ast$ are norm preserving extensions of $y_i^\ast$. 
    
    Since $X$ has the DD2P and $y\in \bigcap_{i=1}^n S(B_X, Px_i^\ast, \varepsilon-\delta)$ we can find an element $w\in \bigcap_{i=1}^n S(B_X, Px_i^\ast, \varepsilon-\delta)$ such that $\|y-w\|\geq 2-\varepsilon/3$. Find $z^\ast\in \Xast$ with $\|z^\ast\|=1$ such that $z^\ast(y-w)\geq 2-\varepsilon/3$, which gives that $z^\ast(y)=Pz^\ast(y)\geq 1-\varepsilon/3$. Therefore, by Lemma~\ref{lemma: extreme point is an strongly exposed point},
    \[
    \|Pz^\ast\|\geq 1-\frac{\varepsilon}3\quad \text{ and }\quad \|z^*-Pz^*\|\leq \frac{\varepsilon}3.
    \]

Now, similarly to \cite[Remark I.1.13]{Harmand1993}, one can show that $B_Y$ is $\sigma(X,Y^*)$-dense in $B_X$. Thus, we can find a $u\in B_Y$ such that for every $i\in \{1,\dots,n\}$
$$|Px_i^*(w - u)| < \delta\quad \textnormal{and}\quad |Pz^*(w-u)|<\frac{\varepsilon}{3}.$$
Then $u\in \bigcap_{i=1}^n S(B_Y,y_i^*,\varepsilon),$ because for every $i\in \{1,\dots,n\}$ we have
\begin{align*}
    y_i^*(u) &= (Px_i^*)(u)\\&
    = (Px_i^*)(w) - (Px_i^*)(w-u)\\&
    > 1 - (\varepsilon-\delta) -\delta\\&
    = 1-\varepsilon.
\end{align*}
Notice that
\begin{align*}
    \|y-u\| &\geq z^*(y-u)\\&
    = z^*(y - w) + Pz^*(w -u) + (z^* - Pz^*)(w)\\&
    \geq 2-\frac{\varepsilon}{3} - \frac{\varepsilon}{3} - \frac{\varepsilon}{3}\\&
    = 2-\varepsilon.
\end{align*}
Therefore, $Y$ has the DD2P.

  $(b)$ Take $n=1$ in the proof of $(a)$.
\end{proof}

As an application of Theorem~\ref{thm: DD2P F-ideals}, we obtain the known stability results for the diametral diameter two properties in $M$-ideals (see \cite[Propositions~2.3.4 and 2.3.5]{Pirk2020}).

\begin{cor}
  Let $X$ be a Banach space and $Y$ an $M$-ideal in $X$. 
  \begin{itemize}
    \item[$(a)$] If $X$ has the DD2P, then $Y$ has the DD2P;
    \item[$(b)$] If $X$ has the DLD2P, then $Y$ has the DLD2P.
  \end{itemize}
\end{cor}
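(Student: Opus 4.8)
The plan is to deduce the corollary directly from Theorem~\ref{thm: DD2P F-ideals} by recognising an $M$-ideal as a particular $F$-ideal and verifying that the hypothesis on $F$ in the theorem is automatically satisfied in this case. First I would recall that, by the definition of an $F$-ideal, an $M$-ideal is exactly a $\|\cdot\|_\infty$-ideal, i.e.\ the case $F=\|\cdot\|_\infty$. Indeed, the $M$-ideal decomposition $\Xast=\Yast\oplus_1 Y^\perp$ can be rewritten as $\n{\xast}=\n{P\xast}+\n{\xast-P\xast}=F^\ast(\n{P\xast},\n{\xast-P\xast})$ where $F^\ast=\|\cdot\|_1$; since $(\|\cdot\|_\infty)^\ast=\|\cdot\|_1$, this corresponds precisely to $F=\|\cdot\|_\infty$.

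Next I would check the extreme-point hypothesis: Theorem~\ref{thm: DD2P F-ideals} requires that $(1,0)$ be an extreme point of $B_{(\mathbb R^2,F^\ast)}$. With $F=\|\cdot\|_\infty$ we have $F^\ast=\|\cdot\|_1$, so the relevant ball is $B_{(\mathbb R^2,\|\cdot\|_1)}$, whose extreme points are exactly $(\pm 1,0)$ and $(0,\pm 1)$. In particular $(1,0)$ is an extreme point of $B_{(\mathbb R^2,\|\cdot\|_1)}$, so the hypothesis holds. Both $F=\|\cdot\|_\infty$ and $F^\ast=\|\cdot\|_1$ are absolute and normalized, so all the standing assumptions of the theorem are met.

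With these two observations in place, the corollary is immediate: an $M$-ideal $Y$ in $X$ is a $\|\cdot\|_\infty$-ideal, and $\|\cdot\|_\infty$ satisfies the hypotheses of Theorem~\ref{thm: DD2P F-ideals}, so applying parts $(a)$ and $(b)$ of that theorem with $F=\|\cdot\|_\infty$ yields precisely the two assertions of the corollary. I do not anticipate any genuine obstacle here, since the statement is a specialisation of the preceding theorem; the only point requiring a moment's care is the bookkeeping between $F$ and its dual $F^\ast$, namely making sure that the extreme-point condition is checked on $B_{(\mathbb R^2,F^\ast)}=B_{(\mathbb R^2,\|\cdot\|_1)}$ rather than on $B_{(\mathbb R^2,F)}=B_{(\mathbb R^2,\|\cdot\|_\infty)}$, and confirming that $(1,0)$ is indeed extreme for the $\ell_1$-ball.
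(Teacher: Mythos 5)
Your proposal is correct and is exactly the paper's intended argument: the corollary is stated as an immediate application of Theorem~\ref{thm: DD2P F-ideals}, obtained by viewing the $M$-ideal as a $\|\cdot\|_\infty$-ideal and noting that $(1,0)$ is an extreme point of $B_{(\mathbb{R}^2,\|\cdot\|_1)}$, the dual ball relevant to the theorem's hypothesis. Your care in checking the extreme-point condition on $B_{(\mathbb{R}^2,F^\ast)}$ rather than $B_{(\mathbb{R}^2,F)}$ is precisely the right bookkeeping, since that is where the hypothesis is used (via Lemma~\ref{lemma: extreme point is an strongly exposed point}) in the theorem's proof.
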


In \cite[Problem~8]{Pirk2020} it was asked whether a similar statement as Theorem~\ref{thm: Daugavet M-ideal}, (b), also holds for the diametral diameter two properties. In the following Proposition we will give a positive answer to this question.

\begin{prop}\label{prop: DD2P lifts}
  Let $X$ be a Banach space and $Y$ an $M$-ideal in $X$. 
  \begin{itemize}
      \item[$(a)$] If $Y$ and $X/Y$ share the DD2P, then $X$ has the DD2P;
      \item[$(b)$] If $Y$ and $X/Y$ share the DLD2P, then $X$ has the DLD2P.
  \end{itemize}
  \end{prop}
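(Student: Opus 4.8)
The plan is to mimic the lifting part of Theorem~\ref{thm: Daugavet M-ideal} while keeping track of the extra constraint that the base point must itself lie in the given neighbourhood. By the remark in Section~\ref{sec: preliminary} that finite intersections of slices form a basis for the weak topology of $B_X$, it suffices to treat, for $(a)$, a set $U=\bigcap_{i=1}^n S(B_X,x_i^*,\alpha)$ (and for $(b)$ a single slice), a point $x\in U$ with $\|x\|=1$, and $\varepsilon>0$, and to produce $y\in U$ with $\|x-y\|\ge 2-\varepsilon$. Normalising $\|x_i^*\|=1$ and using that $Y$ is an $M$-ideal, I would decompose $x_i^*=f_i+g_i$ with $f_i=Px_i^*$ in the $Y^*$-summand and $g_i\in Y^\perp=(X/Y)^*$, so that $\|f_i\|+\|g_i\|=1$. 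Throughout I would use the quotient map $q\colon X\to X/Y$, for which $\|x-y\|\ge\|q(x)-q(y)\|$, and the fact (as in the proof of Theorem~\ref{thm: DD2P F-ideals}) that $B_Y$ is $\sigma(X,Y^*)$-dense in $B_X$.

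The argument would split according to $d:=\operatorname{dist}(x,Y)=\|q(x)\|$. \emph{If $d$ is close to $1$}, then $q(x)/d$ is almost a unit vector of $X/Y$, and the functionals $g_i$ induce slices of $B_{X/Y}$ that, after a harmless enlargement of $\alpha$, contain $q(x)/d$. Using the DD2P (resp.\ DLD2P) of $X/Y$, I would find a point $v$ in the intersection of these quotient slices with $\|q(x)/d-v\|\ge 2-\varepsilon'$, lift it metrically to some $\tilde y\in B_X$ with $q(\tilde y)=v$, and correct $\tilde y$ by an element of $Y$ (using the $\sigma(X,Y^*)$-density to restore the values of the $f_i$ without changing $q(\tilde y)$) so that the resulting $y$ lies in $U$. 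Then $\|x-y\|\ge\|q(x)-q(y)\|=\|q(x)-v\|\ge 2-\varepsilon$.

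\emph{If $d$ is bounded away from $1$}, I would first observe that $x\in U$ together with $d<1$ forces each $\|g_i\|$ to be small: since $g_i(x)\le\|g_i\|\,d$ while $f_i(x)\le\|f_i\|$ and $f_i(x)+g_i(x)>1-\alpha$, keeping $\alpha$ small makes $U$ essentially a neighbourhood coming from $Y^*$, and the bidual component $a_x:=\pi(x)\in Y^{**}$ has norm one. I would then pick $\psi\in S_{Y^*}$ whose norm-preserving extension $\Psi$ into the $Y^*$-summand nearly norms $x$, use the $\sigma(X,Y^*)$-density to choose a unit vector $\hat y_0\in S_Y$ agreeing with $x$ on $\psi$ and on the $f_i|_Y$, and apply the DD2P (resp.\ DLD2P) of $Y$ to $\hat y_0$ inside the corresponding $Y$-slices to obtain a partner $y\in B_Y\subseteq B_X$. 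Since $g_i(y)=0$, such a $y$ automatically lies in $U$, and it remains only to check $\|x-y\|\ge 2-\varepsilon$.

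\textbf{The main obstacle} is exactly this last verification in the case $d<1$: the property of $Y$ only guarantees that $y$ is almost diametral to the \emph{auxiliary} vector $\hat y_0\in S_Y$, whereas $a_x$ may genuinely lie in $Y^{**}\setminus Y$, so that $\|x-\hat y_0\|$ need not be small and diametrality to $\hat y_0$ does not transfer to $x$ by the triangle inequality. The way I would resolve this is to arrange that the functional witnessing $\|\hat y_0-y\|\ge 2-\varepsilon'$ can be taken in the $Y^*$-summand and to have chosen $\hat y_0$ to coincide with $x$ against that witness up to the prescribed tolerance; concretely, one reads the requirement as finding, for $a_x\in S_{Y^{**}}$ and a slice of $B_Y$ nearly normed by $a_x$, a point of that slice almost antipodal to $a_x$, and one produces it from the DD2P/DLD2P of $Y$ via a Goldstine/weak$^*$ approximation that simultaneously fixes the slice functionals and the witness. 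It is precisely here that the hypotheses on \emph{both} $Y$ and $X/Y$, together with the decomposition $X^*=Y^*\oplus_1 Y^\perp$ of the dual, are indispensable; the quotient case and the reduction to finite intersections of slices are comparatively routine.
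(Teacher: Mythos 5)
The paper itself omits the proof of Proposition~\ref{prop: DD2P lifts}, deferring to \cite[Proposition~2.11]{Kadets2000}, so the benchmark is that $M$-ideal argument. Your skeleton --- decompose $X^{\ast}=Y^{\ast}\oplus_1 Y^{\perp}$, split according to $d=\|q(x)\|$, use the property of $X/Y$ when $d$ is close to $1$ and the property of $Y$ otherwise --- has the right shape and is in the spirit of that proof. But even your first case needs a repair: lifting $v$ and then ``correcting by an element of $Y$'' cannot be justified by $\sigma(X,Y^{\ast})$-density alone, because adding an element of $Y$ to the lift must not take you out of $B_X$; keeping the corrected point in the ball is exactly what the ball-intersection machinery of $M$-ideals is for (Werner's result \cite[Proposition~2.3]{Werner1994}, which the paper invokes in the proof of Proposition~\ref{prop: convex and M-ideal}). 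Also, your claim that $d$ bounded away from $1$ forces each $\|g_i\|$ to be small only yields $\|g_i\|\leq (1-x_i^{\ast}(x))/(1-d)$, which need not be smaller than $\alpha_i$; so in the second case the sought point cannot in general be taken inside $B_Y$, and it must still carry a quotient component.

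The genuine gap is the one you flag yourself, and your proposed resolution does not close it: it is circular. The witness functional for $\|\hat y_0-y\|\geq 2-\varepsilon'$ comes into existence only \emph{after} the DD2P of $Y$ is applied, hence after $\hat y_0$ has been fixed, while a Goldstine/weak$^{\ast}$ approximation can match $x$ (equivalently your $a_x\in S_{Y^{\ast\ast}}$) only against finitely many functionals chosen \emph{in advance}; one cannot ``have chosen $\hat y_0$ to coincide with $x$ against that witness''. What is missing is a device making \emph{every} possible witness work automatically, and this again requires the $M$-ideal ball intersection properties rather than bare density. Concretely, in the case $d<1$ one has $\|a_x\|=1$; pick $h_0\in S_{Y^{\ast}}$ with $\langle a_x,h_0\rangle>1-\delta$ and $y_1\in B_Y$ with $h_0(y_1)>1-\delta$, and apply the strict $2$-ball property \cite[Theorem~I.2.2]{Harmand1993} to the balls $B(x,1+\delta)$ and $B\bigl(x+(1+d)y_1,\,d+\delta\bigr)$ (both meet $Y$ and each other). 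This produces a \emph{proxy} $z_0\in Y$ with $\|x-z_0\|\leq 1+2\delta$ and $\|z_0\|\geq 2-5\delta$. If one now applies the DD2P of $Y$ at the unit sphere point $z_0/\|z_0\|$, then any witness $h\in S_{Y^{\ast}}$ with $h(z_0/\|z_0\|)\geq 1-\delta$ satisfies, via its norm-preserving extension lying in $\ran P$,
\[
\langle a_x,h\rangle \;\geq\; h(z_0)-\|x-z_0\| \;\geq\; (1-\delta)(2-5\delta)-(1+2\delta)\;\geq\; 1-9\delta ,
\]
so almost-diametrality to the proxy transfers to $x$ no matter which witness appears; afterwards one uses \cite[Proposition~2.3]{Werner1994} with the (now known) functional $h$ and the slice functionals to push the far point into $U$ without spoiling the estimate. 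Without this, or some equivalent use of ball intersections, the key inequality $\|x-y\|\geq 2-\varepsilon$ in your second case remains unproven, so the proposal as it stands is not a proof.
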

  
  We omit the proof of Proposition~\ref{prop: DD2P lifts}, because it
is very similar to the proof of \cite[Proposition 2.11]{Kadets2000}.

 We end this section by studying how the convex DLD2P behaves in the context of $M$-ideals. Recall from \cite{Abrahamsen2020a} that the convex DLD2P is not inherited by $M$-ideals in general, since $c_0$ is an $M$-ideal in $\ell_\infty$, but even though the latter has the convex DLD2P, $c_0$ does not. On the other hand, we will now show that the convex DLD2P carries over to the whole space from its proper $M$-ideal similarly to the the regular diameter two properties \cite{Haller2014a}.

\begin{prop}\label{prop: convex and M-ideal}
  Let $X$ be a Banach space and $Y$ an $M$-ideal in $X$. If $Y$ has the convex DLD2P, then $X$ has the convex DLD2P.
\end{prop}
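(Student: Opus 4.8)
The plan is to use the slice characterisation of the convex DLD2P from Lemma~\ref{lem: convex DLD2P crit}: I must show that every slice $S$ of $B_X$ meets $\Delta_X$, knowing that every slice of $B_Y$ meets $\Delta_Y$. Fix a slice $S(B_X,x^\ast,\alpha)$ with $x^\ast\in S_{X^\ast}$. Since $Y$ is an $M$-ideal, $X^\ast=Y^\ast\oplus_1 Y^\perp$, so I would write $x^\ast=Px^\ast+(x^\ast-Px^\ast)$ with $\|x^\ast\|=\|Px^\ast\|+\|x^\ast-Px^\ast\|=1$. The natural first step is to pass to the $Y^\ast$-component: set $y^\ast:=Px^\ast/\|Px^\ast\|\in S_{Y^\ast}$ (after checking $Px^\ast\neq 0$, which can be arranged by choosing $x^\ast$ suitably, or handled by approximation) and use the convex DLD2P of $Y$ to produce a $\Delta$-point $y\in\Delta_Y$ lying in a suitable slice of $B_Y$ determined by $y^\ast$.

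The key geometric input is that $Y$-elements are well-behaved as elements of $X$. First I would verify that a $\Delta$-point of $Y$ remains a $\Delta$-point of $X$, i.e.\ $\Delta_Y\subseteq\Delta_X$; this uses the $M$-ideal structure, since for $y\in\Delta_Y$ and a slice $S(B_X,z^\ast,\beta)$ containing $y$, the restriction behaviour of $z^\ast$ together with the decomposition $X^\ast=Y^\ast\oplus_1 Y^\perp$ lets me find an almost-diametral partner inside $B_Y\subseteq B_X$, invoking Lemma~\ref{lem: Delta point crit}. Second, I would use that $B_Y$ is $\sigma(X,Y^\ast)$-dense in $B_X$ (the density fact already used in the proof of Theorem~\ref{thm: DD2P F-ideals}), which ensures that the slice of $B_Y$ cut out by $y^\ast$ actually corresponds to points that sit inside the original slice $S(B_X,x^\ast,\alpha)$ of $B_X$.

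Putting these together: choosing the slice of $B_Y$ carefully so that its elements $y$ satisfy $x^\ast(y)>1-\alpha$ (here the control on the $Y^\perp$-part of $x^\ast$ matters, since $Px^\ast$ acts on $Y$ exactly as $y^\ast$ does up to scaling), the convex DLD2P of $Y$ gives a point $y\in\Delta_Y$ in that slice; by $\Delta_Y\subseteq\Delta_X$ this $y$ lies in $\Delta_X$, and by construction $y\in S(B_X,x^\ast,\alpha)$. Hence $S(B_X,x^\ast,\alpha)\cap\Delta_X\neq\emptyset$, and Lemma~\ref{lem: convex DLD2P crit} yields the convex DLD2P for $X$.

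I expect the main obstacle to be the bookkeeping that forces a $Y$-slice coming from $y^\ast=Px^\ast/\|Px^\ast\|$ to land inside the $X$-slice $S(B_X,x^\ast,\alpha)$: one must control the contribution of the annihilator part $x^\ast-Px^\ast$ on the chosen points, and the honest way to do this is via the $\sigma(X,Y^\ast)$-density of $B_Y$ in $B_X$, which makes the $Y^\perp$-component effectively invisible on a weak-neighbourhood basis. The inclusion $\Delta_Y\subseteq\Delta_X$ is the other point requiring care, since it is exactly where the $M$-ideal (rather than merely subspace) hypothesis is consumed; everything else is a Hahn--Banach separation argument already packaged in Lemma~\ref{lem: convex DLD2P crit}.
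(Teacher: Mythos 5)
There is a genuine gap, and it is fatal to the proposed route. Your plan is to produce a $\Delta$-point of $Y$ that lies \emph{inside} the given slice $S(B_X,x^\ast,\alpha)$, and your last paragraph correctly identifies the obstacle but wrongly claims it can be overcome by $\sigma(X,Y^\ast)$-density. It cannot: for every $y\in B_Y$ one has $x^\ast(y)=Px^\ast(y)\leq\|Px^\ast\|$, since $x^\ast-Px^\ast\in Y^\perp$ vanishes identically on $Y$. Hence, whenever $\|Px^\ast\|\leq 1-\alpha$ (for instance when $x^\ast\in S_{Y^\perp}$, which exists as soon as $Y$ is a proper subspace), the slice $S(B_X,x^\ast,\alpha)$ is \emph{disjoint} from $B_Y$, and no choice of $Y$-slice, however careful, can place a point of $Y$ inside it. The $\sigma(X,Y^\ast)$-density of $B_Y$ in $B_X$ is of no help here: that topology is generated by the functionals $Pz^\ast$, so it says nothing about the values of the annihilator part $x^\ast-Px^\ast$, which is exactly the part you need the chosen point to pick up. (In the proof of Theorem~\ref{thm: DD2P F-ideals} the density argument works precisely because the slices there are cut by functionals of the form $Px_i^\ast$, not by arbitrary elements of $S_{X^\ast}$.) For what it is worth, the other half of your plan is sound: the inclusion $\Delta_Y\subseteq\Delta_X$ does hold, and in fact holds for \emph{any} closed subspace $Y$ by restricting the slicing functional to $Y$ and rescaling --- the $M$-ideal hypothesis is not consumed there at all.

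The paper's proof resolves the obstruction by abandoning the idea that the $\Delta$-point of $X$ should lie in $Y$. It takes $y\in S(B_Y,Px^\ast/\|Px^\ast\|,\beta)\cap\Delta_Y$ together with the witnessing convex combination $y_1,\dots,y_n$, picks $x\in B_X$ that almost norms the annihilator part $x^\ast-Px^\ast$, and then uses Werner's basic inequality for $M$-ideals (\cite[Proposition~2.3]{Werner1994}) to find $z\in Y$ with $\|y+x-z\|<1+\varepsilon$, $\|y_i+x-z\|<1+\varepsilon$, and $|Px^\ast(x-z)|<\varepsilon$. The candidate point is $u=(y+x-z)/(1+\varepsilon)$: the translate $x-z$ contributes the missing value $(x^\ast-Px^\ast)(x)\approx\|x^\ast-Px^\ast\|$ so that $x^\ast(u)>1-3\varepsilon$, while the \emph{common} translation of $y$ and all $y_i$ preserves the mutual distances, so the diametral convex-combination structure survives: $\|u_i-u\|(1+\varepsilon)=\|y_i-y\|\geq 2-\varepsilon$. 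This perturbation step, powered by the $M$-ideal structure, is the idea your proposal is missing and cannot be replaced by density or separation arguments.
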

\begin{proof}
The proof is modelled on the proof of \cite[Proposition~3]{Haller2014a}. Let $S(B_{X},x^*,\alpha)$ be a slice and let $\varepsilon > 0$. Let $P:X^* \to X^*$ with $\ker P = Y^\perp$ be the $M$-ideal projection.
  Define
  \begin{equation*}
    y^* := \frac{Px^*}{\|Px^*\|}
    \quad \mbox{and} \quad
    \beta :=
    \frac{\varepsilon(1-\|Px^*\|)+\varepsilon^2}{\|Px^*\|} > 0.
  \end{equation*}
  If $Px^*=0$, we can take $y^*\in S_{Y^*}$ arbitrary and set $\beta:= \alpha$. Since $Y$ has the convex DLD2P there exist
  $y \in S(B_Y,y^*,\beta)\cap \Delta_Y$. Thus we can find $n\in \mathbb N$ and $y_1,\dots, y_n\in B_Y$ such that
  \[
  \|y-1/n\sum_{i=1}^n y_i\|\leq \varepsilon\quad \text{ and } \quad\|y_i-y\|\geq 2-\varepsilon \text{ for all $i\in\{1,\dots,n\}$.}
  \]
       The choice of $\beta$ means that
  \begin{equation*}
    Px^*(y) > (\|Px^*\| - \varepsilon)(1+\varepsilon).
  \end{equation*}
  Find $x\in B_X$ such that 
  \begin{equation*}
    (x^*-Px^*)(x) > 
    (\|x^*-Px^*\| - \varepsilon)(1 + \varepsilon).
  \end{equation*}
    By Proposition~2.3 in \cite{Werner1994}, we may choose $z \in Y$ such that
    $$\|y_i + x - z\| < 1+\varepsilon,\quad
    \|y + x - z\| < 1+\varepsilon,\quad\text{and}\quad
    |Px^*(x-z)| < \varepsilon.$$
  
  Define
  \begin{equation*}
    u_i := \frac{y_i + x - z}{1+\varepsilon}
    \quad \mbox{and} \quad
    u := \frac{y+x-z}{1+\varepsilon}.
  \end{equation*}
  Then
  \begin{align*}
    x^*(u) &
    = \frac{x^*(y + x - z)}{1+\varepsilon} \\
    &= \frac{Px^*(y) + (x^*-Px^*)(x) + Px^*(x-z)}
    {1+\varepsilon} \\
    &> \frac{(\|Px^*\|-\varepsilon)(1+\varepsilon) +
      (\|x^*-Px^*\|-\varepsilon)(1+\varepsilon) - \varepsilon}
    {1+\varepsilon} \\
    &> \|x^*\| - 3\varepsilon=1-3\varepsilon. 
  \end{align*}
  Also 
  $\|u-1/n\sum_{i=1}^n u_i\|\leq \varepsilon(1+\varepsilon)$ and $\|u_i-u\|(1+\varepsilon)=\|y_i-y\|\geq 2-\varepsilon$ for all $i\in\{1,\dots,n\}$.
  
  Note that we can assure $\|u\|=1$ by scaling argument. Moreover, since $\varepsilon > 0$ is arbitrary
  we can choose it as small as we like so that
  $u \in S(B_{X},x^*,\alpha)\cap \Delta_X$,
 that is, $X$ has the convex DLD2P.
\end{proof}

\begin{rem}
    Note that one cannot generalize Proposition~\ref{prop: convex and M-ideal} to $F$-ideals, because $Y=\ell_\infty$ has the convex DLD2P and is an $F$-ideal in $X=\ell_\infty\oplus_2 \mathbb{R}$, however $X$ even fails the LD2P.
\end{rem}

\section{Köthe-Bochner spaces}\label{sec: Köthe--Bochner}

We begin by repeating the presentation of Köthe--Bochner spaces as given in \cite{Hardtke2020}. Let $(S,\mathcal{A}, \mu)$ be a complete, $\sigma$-finite measure space. A \emph{Köthe function space} over $(S,\mathcal{A}, \mu)$ is a Banach space $(E, \|\cdot\|_E)$ of real-valued measurable functions on $S$ modulo equality $\mu$-a.e. such that
\begin{itemize}
  \item[(1)] $\chi_A\in E$ for every $A\in \mathcal{A}$ with $\mu(A)<\infty$;
  \item[(2)] for every $f\in E$ and every $A\in\mathcal{A}$ with $\mu(A)<\infty$ one has that $f$ is $\mu$-integrable over $A$;
  \item[(3)] if $g$ is measurable and $f\in E$ such that $|g(t)|\leq |f(t)|$ $\mu$-a.e., then $g\in E$ and $\|g\|_E\leq \|f\|_E$.  
\end{itemize}

If $X$ is a Banach space, then a function $f\colon S\to X$ is called \emph{simple} if there are finitely many disjoint sets $A_1,\dots, A_n\in \mathcal{A}$ such that $\mu(A_i)<\infty$ for every $i\in \{1,\dots,n\}$, $f$ is constant on each $A_i$, and $f(t)=0$ if $t\in S\setminus(\bigcup_{i=1}^n A_i)$. The function $f$ is said to be \emph{Bochner-measurable} if there exists a sequence $(f_n)$ of simple functions such that $\lim_{n\to \infty} \|f_n(t)-f(t)\|=0$ $\mu$-a.e.

For a Köthe function space $E$ and a Banach space $X$ we denote by $E(X)$ the space of all Bochner-measurable functions $f\colon S\to X$ such that $\|f(\cdot)\|\in E$. If we equip $E(X)$ with the norm $\|f\|_{E(X)}=\| \|f(\cdot)\| \|_E$, then $E(X)$ is a Banach space and it is called the \emph{Köthe--Bochner space}. The classical examples of Köthe--Bochner function spaces are the Lebesgue--Bochner spaces $L_p(\mu, X)$ for $1\leq p\leq \infty$. We refer the reader to \cite{Lin2004} for more information on Köthe-Bochner spaces.

J.-D.~Hardtke proved in \cite{Hardtke2020} that the regular diameter two properties are stable by forming Köthe--Bochner spaces. He used the following general technique: if a Banach space property can be described via test families (see \cite[Definition~1.1]{Hardtke2020}) and the property is stable by finite absolute sums, then it is also stable under the formation of corresponding Köthe--Bochner spaces. Since the DLD2P and the convex DLD2P can be solely characterized by $\Delta$-points (see Section~\ref{sec: preliminary}), then it is not hard to describe them via test families. By \cite[Theorem~3.2]{Ivakhno2004} (resp. \cite[Theorem~5.8]{Abrahamsen2020a}), the DLD2P (resp. convex DLD2P) is stable by finite absolute sums, hence Hardtke's technique yields that both of these properties are stable by forming Köthe--Bochner spaces also. However, it is not clear whether the DD2P can be characterized in terms of test families. For this reason and for the simplicity of the direct proofs for the DLD2P and the convex DLD2P we will provide them here.


\begin{thm}\label{thm: Köthe-Bochner}
    Let $(S,\mathcal{A}, \mu)$ be a complete, $\sigma$-finite measure space, $E$ a Köthe function space over $(S,\mathcal{A}, \mu)$ and $X$ a Banach space such that the simple functions are dense in $E(X)$.
    \begin{itemize}
        \item[(a)] If $X$ has the DD2P, then $E(X)$ also has the DD2P;
        \item[(b)] If $X$ has the DLD2P, then $E(X)$ also has the DLD2P;
        \item[(c)] If $X$ has the convex DLD2P, then $E(X)$ also has the convex DLD2P.
    \end{itemize}
\end{thm}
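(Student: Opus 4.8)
The plan is to prove each of the three parts directly by verifying the relevant slice/weak-open criterion inside $E(X)$, using density of simple functions to reduce everything to finitely many "values" living in $X$, and then transporting diametral points from $X$ up to $E(X)$ by placing them on a set of positive measure. I first fix notation: a simple function $f=\sum_{i=1}^m x_i\chi_{A_i}$ with the $A_i\in\mathcal A$ pairwise disjoint of finite positive measure, so that $\|f\|_{E(X)}=\|\sum_i\|x_i\|\chi_{A_i}\|_E$. The key geometric mechanism is that if $x\in S_X$ is a $\Delta$-point (or more generally almost diametral behaviour holds at $x$), and we set $g$ equal to $f$ except that on one fixed block $A_{i_0}$ we replace the value $x_{i_0}$ by a point $y\in B_X$ with $\|x_{i_0}-y\|\geq 2-\eta$, then $g$ stays in $B_{E(X)}$ (by property (3) of a Köthe space, since the pointwise norms do not increase) while $\|f-g\|_{E(X)}$ is large whenever $\mu(A_{i_0})$ carries a definite share of the $E$-norm. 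So the scheme is: normalize, choose a block on which the function concentrates enough mass, apply the $\Delta$-point property of $X$ at the value on that block, and lift the resulting diametral point back into $E(X)$.

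For part (b), the DLD2P, I would start from a slice $S(B_{E(X)},\Phi,\alpha)$ containing a norm-one element $f$, where $\Phi\in S_{E(X)^*}$. Using density of simple functions I replace $f$ by a nearby simple $f=\sum_i x_i\chi_{A_i}$ still in the slice with norm close to one, and after a further small perturbation I may assume each $x_i/\|x_i\|\in S_X$. The dual $E(X)^*$ acts by integration against an $X^*$-valued kernel, so $\Phi(f)=\int \langle \phi(t),f(t)\rangle\,d\mu$; since $\Phi(f)>1-\alpha$, at least one block $A_{i_0}$ must itself satisfy the corresponding slice inequality, i.e. the normalized value $x_{i_0}/\|x_{i_0}\|$ lies in a slice of $B_X$ determined by averaging $\phi$ over $A_{i_0}$. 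Because $X$ has the DLD2P, that normalized value is a $\Delta$-point, so I can find $y\in B_X$ almost diametral to it and still inside that slice; replacing $x_{i_0}$ by $\|x_{i_0}\|\,y$ on $A_{i_0}$ produces $g$ which (after the usual small rescaling to land exactly in $B_{E(X)}$) stays in the original slice and satisfies $\|f-g\|_{E(X)}\geq 2-\varepsilon$. Part (a), the DD2P, is the same argument carried out for a finite intersection of slices rather than a single one (legitimate by the basis remark in Section 2): here I must choose one block that simultaneously witnesses all $n$ slice inequalities, and apply the DD2P of $X$ at the value on that block to a finite intersection of slices in $B_X$.

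For part (c), the convex DLD2P, I use Lemma 2.6: it suffices to show every slice $S(B_{E(X)},\Phi,\alpha)$ meets $\Delta_{E(X)}$. Proceeding as above I locate a block $A_{i_0}$ and a normalized value $v:=x_{i_0}/\|x_{i_0}\|\in S_X$ lying in a slice of $B_X$; since $X$ has the convex DLD2P, that slice meets $\Delta_X$, so I may take $v$ itself to be (or to be replaced by) a $\Delta$-point of $X$. I then claim the corresponding simple function $g$, normalized to the sphere of $E(X)$ and with $v$ on $A_{i_0}$, is a $\Delta$-point of $E(X)$: given any $\delta>0$, convex combinations $\sum_j\lambda_j y_j$ of points $y_j\in\Delta^X_{\delta}(v)$ approximating $v$ lift, block by block on $A_{i_0}$, to convex combinations of points in $\Delta^{E(X)}_{\delta'}(g)$ approximating $g$, so $g\in\overline{\operatorname{conv}}\,\Delta^{E(X)}_{\delta'}(g)$ for every $\delta'>0$. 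Hence the slice meets $\Delta_{E(X)}$.

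The main obstacle I anticipate is the passage from the global functional $\Phi$ to a genuine slice inequality on a single block of $X$: one must make precise, via the $X^*$-valued integral representation of $E(X)^*$ and property (2) of Köthe spaces, that $\Phi(f)>1-\alpha$ forces at least one normalized block value into an honest slice $S(B_X,\,\cdot\,,\alpha')$ of $B_X$ whose defining functional has norm close to one, and to control how the weights $\|x_i\|$ and the measures $\mu(A_i)$ redistribute the $E$-norm so that the diametral estimate on one block survives the $E$-norm (using the ideal property (3) that replacing a value by one of no larger norm keeps us in $B_{E(X)}$, while the difference is supported on that block). The bookkeeping of the three independent small parameters — the density approximation, the slice margin $\alpha$, and the diametrality loss $\varepsilon$ — together with the final rescaling onto $S_{E(X)}$, is routine but is where the care lies.
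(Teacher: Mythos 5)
There is a genuine gap at the heart of your plan: the one-block modification cannot produce the diametral estimate. If $g$ agrees with $f$ except on a single block $A_{i_0}$, then
$\|f-g\|_{E(X)}=\big\|\,\|x_{i_0}-y\|\chi_{A_{i_0}}\big\|_E\leq 2\,\big\|\,\|x_{i_0}\|\chi_{A_{i_0}}\big\|_E$,
so you can only reach $2-\varepsilon$ if that single block carries essentially all of the norm of $f$, i.e.\ $\big\|\,\|x_{i_0}\|\chi_{A_{i_0}}\big\|_E\geq 1-\varepsilon/2$ --- and nothing forces that. Concretely, in $L_1([0,1],X)$ take $f=x_1\chi_{[0,1/2]}+x_2\chi_{(1/2,1]}$ with $\|x_1\|=\|x_2\|=1$; however diametral $y$ is to $x_1$ in $B_X$, replacing $x_1$ by $y$ on $[0,1/2]$ gives $\|f-g\|_{L_1}\leq 1$. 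Since a simple function in a slice can spread its norm evenly over arbitrarily many blocks, ``choose a block on which the function concentrates enough mass'' is not available, and this defeats all three parts: in (a) and (b) the constructed $g$ is not almost diametral to $f$, and in (c) the lifted $g$ need not be a $\Delta$-point of $E(X)$, because its candidate diametral partners differ from it only on $A_{i_0}$. A secondary issue is your appeal to an integral representation $\Phi(f)=\int\langle\phi(t),f(t)\rangle\,d\mu$ of elements of $E(X)^*$, which is not valid for general $E$ and $X$.

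The repair --- and this is what the paper does --- is to modify \emph{every} block simultaneously. Define block functionals directly by $x_{ik}^*(x):=f_i^*(x\chi_{A_k})$ (no representation of the dual needed), split the slack $f_i^*(f)-(1-\alpha_i)>0$ into positive numbers $\alpha_{ik}$ with $\sum_k\alpha_{ik}\leq f_i^*(f)-(1-\alpha_i)$, and consider on each block the slices $S_{ik}=\{y\in\|x_k\|B_X:\ x_{ik}^*(y)>x_{ik}^*(x_k)-\alpha_{ik}\}$, which contain $x_k$. Applying the DD2P (resp.\ DLD2P, convex DLD2P) of $X$ in the scaled ball $\|x_k\|B_X$ yields $y_k\in\bigcap_i S_{ik}$ with $\|x_k-y_k\|\geq(2-\varepsilon)\|x_k\|$ for every $k$ with $x_k\neq 0$. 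Setting $g=\sum_k y_k\chi_{A_k}$, the ideal property (3) keeps $g\in B_{E(X)}$; summing the blockwise slice estimates keeps $g$ in all the original slices; and, crucially, the difference is now \emph{pointwise} large, $\|f(s)-g(s)\|\geq(2-\varepsilon)\|f(s)\|$ a.e., so the lattice property converts this into $\|f-g\|_{E(X)}\geq(2-\varepsilon)\|f\|_{E(X)}=2-\varepsilon$. (Part (c) runs the same scheme with scaled $\Delta$-points and their approximating convex combinations placed on every block.) Your blockwise lifting and your use of property (3) are exactly right; the missing idea is that the diametral property of $X$ must be invoked on all blocks at once, with the slice budget distributed across them, rather than localized to one block.
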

\begin{proof}
$(a)$
	Let $f\in \bigcap_{i=1}^n S(B_{E(X)},f_i^\ast, \alpha_i)$ with $\|f\|_{E(X)}=1$, where $f_i^\ast\in (E(X))^\ast$ are with norm 1, and $\varepsilon>0$. We need to find a $g\in \bigcap_{i=1}^n S(B_{E(X)},f_i^\ast,\alpha_i)$ such that $\|f-g\|\geq 2-\varepsilon$. Since the simple functions are dense in $E(X)$, we may assume that $f=\sum_{k=1}^{m} x_k \chi_{A_k}$, where $A_k$ are pairwise disjoint measurable sets of $\mathcal{A}$  and $x_k\in X$.
	
	Define $x^\ast_{ik}$ by $x^\ast_{ik}(x)=f_i^\ast(x\cdot \chi_{A_k})$. Note that $x^\ast_{ik}\in X^\ast$ for every $i\in \{1,\dots, n\}$. Clearly, $x^\ast_{ik}$ is linear and it is even continuous. Indeed, if $x\in B_X$, then for every $i\in \{1,\dots,n\}$ we have
	\begin{align*}
	|x^\ast_{ik}(x)|&=|f_i^\ast(x\cdot \chi_{A_k})|\leq \|f_i^\ast\|\cdot\|x\cdot \chi_{A_k}\|\\
	&= \|f_i^\ast\|\cdot\|x\|\cdot \|\chi_{A_k}\|\leq \|f_i^\ast\|\cdot\|\chi_{A_k}\|.
	\end{align*}
	Thus $\|x^\ast_{ik}\|\leq \|f_i^\ast\|\|\chi_{A_k}\|$.
	
	Now we want to construct $g=\sum_{k=1}^{m} y_k \chi_{A_k}$. If $x_k=0$, we take $y_k=0$. If $x_k\neq 0$, then consider the slices $$
	S_{ik}=\{y\in \|x_k\|B_X\colon x^\ast_{ik}(y)>x^\ast_{ik}(x_k)-\alpha_{ik}\},$$
	where $\alpha_{ik}>0$ are such that $\sum_{k=1}^m\alpha_{ik}\leq f_i^\ast(f)-(1-\alpha_i)$.
	Since $X$ has the DD2P, there are $y_k\in \bigcap_{i=1}^n S_{ik}$ such that $\|x_k-y_k\|>(2-\varepsilon)\|x_k\|$ for every $k\in\{1,\dots,m\}$ for which $x_k\neq 0$. 
	
	From $\|y_k\|\leq \|x_k\|$, we conclude that $\|g(\cdot)\|\leq \|f(\cdot)\|$. This implies that $g\in E(X)$ and $\|g\|\leq 1$.
	
	Observe that $g\in \bigcap_{i=1}^n S(B_{E(X)},f_i^\ast,\alpha_i)$. Indeed, we have for every $i\in \{1,\dots,n\}$ that
	\begin{align*}
	f_i^\ast(g)&=f_i^\ast\Big(\sum_{k=1}^m y_k\chi_{A_k}\Big)=\sum_{k=1}^m x^\ast_{ik}(y_k)> \sum_{k=1}^m x^\ast_{ik}(x_k)-\sum_{k=1}^m \alpha_{ik}\\
	&=f_i^\ast(f)-\sum_{k=1}^m \alpha_{ik}\geq 1-\alpha_i.
	\end{align*}

	Finally, if $s\in A_k$, then $\|f(s)-g(s)\|=\|x_k-y_k\|>(2-\varepsilon)\|x_k\|$. Therefore, 
	\[
	\|f-g\|>(2-\varepsilon)\|f\|=2-\varepsilon,
	\]
	and we are done.
	
	$(b)$
	Take $n=1$ in the proof of $(a).$
	
	$(c)$
		Let $f^*\in S_{{E(X)}^*}$ and $\alpha, \varepsilon>0$. Our aim is to show that there exists $f\in S(B_{E(X)}, f^*,\varepsilon)\cap S_{E(X)}$ such that $f$ is a $\Delta$-point in $X$, i.e. $f\in \clco\Delta_\varepsilon(f)$ for every $\varepsilon>0.$
	
	Fix an arbitrary $f_0\in S(B_{E(X)}, f^*,\alpha)$. 
	Since the simple functions are dense in $E(X)$, we may assume that $f_0=\sum_{k=1}^{m} x_k \chi_{A_k}$, where $A_k$ are pairwise disjoint measurable sets of $\mathcal{A}$  and $x_k\in X$.
	
	Define $x^\ast_{k}$ by $x^\ast_{k}(x)=f^\ast(x\cdot \chi_{A_k})$. Note that, analogically to part $(a)$ it can be shown that $x^\ast_{k}\in X^\ast$.
	
	Now we want to construct $f=\sum_{k=1}^{m} y_k \chi_{A_k}$. If $x_k=0$, we take $y_k=0$. If $x_k\neq 0$, then consider the slices
	$$S_k=\{y\in \|x_k\|B_X\colon x^\ast_k(y)>x^\ast_k(x_k)-\alpha_k\},$$
	where $\alpha_k>0$ are such that $\sum_{k=1}^m\alpha_k\leq f^\ast(f)-(1-\alpha)$.
	Let $\delta> 0$.

	Since $X$ has the convex DLD2P, there are $y_k\in S_k$ such that $y_k/\|x_k\|$ is a $\Delta$-point for every $k\in\{1,\dots,m\}$, hence, there exist $y_{ki}\in \|x_k\|B_X$ such that
	$$\Big\Vert y_k - \sum_{i=1}^n \frac{1}{n} y_{ki}\Big\Vert < \delta \|x_k\|$$
	and $\|y_k - y_{ki}\| \geq (2-\varepsilon)\|x_k\|$ for every $\varepsilon>0$ and every $k$. 
	
	From $\|y_k\|\leq \|x_k\|$, we conclude that $\|f(\cdot)\|\leq \|f_0(\cdot)\|$. This implies that $f\in E(X)$ and $\|f\|\leq 1$.
	
	Observe that $f\in S(B_{E(X)}, f^\ast,\alpha)$. Indeed,
	\begin{align*}
	f^\ast(f)&=f^\ast\Big(\sum_{k=1}^m y_k\chi_{A_k}\Big)=\sum_{k=1}^m x^\ast_{k}(y_k)> \sum_{k=1}^m x^\ast_{k}(x_k)-\sum_{k=1}^m \alpha_k\\
	&=f^\ast(f)-\sum_{k=1}^m \alpha_k\geq 1-\alpha.
	\end{align*}
	
	Note that now $1-\alpha \leq \|f\|\leq 1.$ In the following we may assume that $f$ is of norm 1, if necessary, we use $f/\|f\|$ instead.

	Now we will show, that $f$ is indeed a $\Delta$-point, i.e.

	there exist $f_i\in B_{E(X)}$ such that $\|f - 1/n \sum_{i=1}^n f_i\|<\delta$ and for every $i\in\{1,\dots,n\}$ we have $\|f-f_i\|\geq 2-\varepsilon$.

	Define $f_i = \sum_{k=1}^m y_{ki} \chi_{A_k}.$
	From $\|y_{ki}\|\leq \|x_k\|$, we conclude that $\|f_i(\cdot)\|\leq \|f_0(\cdot)\|$. This implies that $f_i\in E(X)$ and $\|f_i\|\leq 1$ for every $i\in\{1,\dots,n\}$.

	Secondly, it is not hard to notice that if $s\in A_k$ then
	\begin{align*}
	    \Big\Vert f(s) - \frac{1}{n} \sum_{i=1}^n f_i(s)\Big \Vert &=
	    \Big\Vert y_k - \frac{1}{n}\sum_{i=1}^n y_{ki}\Big\Vert < \delta \|x_k\|, 
	\end{align*}
	and therefore $\|f - 1/n \sum_{i=1}^n f_i\| < \delta \|f_0\|\leq\delta.$
	
	Finally, if $s\in A_k$, then
	\begin{align*}
	    \|f(s) - f_i(s)\| &
	    = \|y_k - y_{ki}\| \geq (2-\varepsilon)\|x_k\|,
	\end{align*}
	and therefore, 
	\[
	\|f-f_i\|>(2-\varepsilon)\|f_0\|=2-\varepsilon,
	\]
	which completes the proof.

\end{proof}

From Theorem~\ref{thm: Köthe-Bochner} we immediately have the following.

\begin{cor}\label{cor: DD2P in L_p}
  Let $X$ be a Banach space, $(\Omega, \mathcal{A}, \mu)$ be a finite measure space, and $1\leq p\leq\infty$.
 If $X$ has the DD2P (resp. DLD2P, convex DLD2P), then $L_p(\mu,X)$ also has the DD2P (resp. DLD2P, convex DLD2P) whenever $L_p(\mu)\neq \{0\}$.
\end{cor}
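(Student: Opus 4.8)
The plan is to read the corollary off from Theorem~\ref{thm: Köthe-Bochner} applied to the Köthe function space $E=L_p(\mu)$, so that the entire task reduces to verifying the three structural hypotheses of that theorem. Since $L_p(\mu)$, $L_p(\mu,X)$ and the diametral properties are all unaffected by passing to the completion of the measure space, I would first complete $(\Omega,\mathcal{A},\mu)$ in order to meet the completeness requirement, and I would note that a finite measure space is trivially $\sigma$-finite, being covered by the single set $\Omega$.

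Second, I would check that $E=L_p(\mu)$ is a Köthe function space. Finiteness of $\mu$ makes this routine: every $A\in\mathcal{A}$ has $\mu(A)<\infty$ and $\|\chi_A\|_p=\mu(A)^{1/p}<\infty$ (with $\|\chi_A\|_\infty=1$ when $p=\infty$), which is axiom~(1); integrability of $f\in L_p(\mu)$ over a set of finite measure follows from Hölder's inequality via $\int_A|f|\,d\mu\le\|f\|_p\,\mu(A)^{1/q}$, which is axiom~(2); and the ideal property~(3) is immediate, since $|g|\le|f|$ pointwise forces $\|g\|_p\le\|f\|_p$.

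Third --- and this is where the cases split --- I would verify that the simple functions are dense in $L_p(\mu,X)$. For $1\le p<\infty$ this is the classical Bochner approximation theorem: a Bochner-measurable $f$ is an a.e.\ pointwise limit of simple functions $f_n$ that may be taken to satisfy $\|f_n(\cdot)\|\le 2\|f(\cdot)\|$, and the dominated convergence theorem then upgrades this to $\|f_n-f\|_{L_p(\mu,X)}\to 0$. Once this is in hand, Theorem~\ref{thm: Köthe-Bochner} directly delivers all three conclusions (DD2P, DLD2P, convex DLD2P) for every $1\le p<\infty$.

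The main obstacle is the endpoint $p=\infty$, for which the density hypothesis genuinely fails when $X$ is infinite-dimensional: a function on $[0,1]$ taking the value $e_n$ on $[\tfrac{1}{n+1},\tfrac{1}{n})$ lies in $L_\infty([0,1],\ell_2)$ yet is at $L_\infty$-distance at least $\sqrt2/2$ from every simple function, so Theorem~\ref{thm: Köthe-Bochner} cannot be quoted verbatim. Here I would instead exploit the sum structure of $L_\infty$: when $\mu$ is purely atomic with finitely many atoms, $L_\infty(\mu,X)$ is a finite $\ell_\infty$-sum of copies of $X$, in which case every function is already simple and the theorem applies unchanged (equivalently, one invokes the stability of the DLD2P and the convex DLD2P under finite absolute sums recorded via \cite[Theorem~3.2]{Ivakhno2004} and \cite[Theorem~5.8]{Abrahamsen2020a}). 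Establishing the $p=\infty$ statement for an arbitrary finite measure --- in particular in the atomless case, where the reduction to finite $\ell_\infty$-sums is not immediate --- is the step that I expect to require the most care.
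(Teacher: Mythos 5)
For $1\le p<\infty$ your proposal is correct and coincides with the paper's own argument: the paper offers no proof beyond the sentence ``From Theorem~\ref{thm: Köthe-Bochner} we immediately have the following,'' so your verifications of the hypotheses --- passing to the completion of the measure space, checking the three Köthe axioms for $L_p(\mu)$, and deducing density of the simple functions from Bochner measurability together with dominated convergence --- are exactly the routine details the paper leaves implicit.

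The case $p=\infty$ is where your review has real content, and what you found is a gap in the paper's deduction, not merely in your own write-up. Your counterexample is valid: with the paper's definition of $E(X)$ (Bochner-measurable $f$ with $\|f(\cdot)\|\in E$), the function equal to $e_n$ on $[\tfrac{1}{n+1},\tfrac{1}{n})$ lies in $L_\infty([0,1],\ell_2)$ and is at distance at least $\tfrac{\sqrt{2}}{2}$ from every simple function, so the density hypothesis of Theorem~\ref{thm: Köthe-Bochner} genuinely fails for $E=L_\infty(\mu)$ with infinite-dimensional $X$ (unless $\mu$ is purely atomic with finitely many atoms). Consequently the corollary's endpoint $p=\infty$ does \emph{not} follow ``immediately'' from the theorem, even though the paper asserts it with no further argument. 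Your proposal is therefore incomplete as a proof of the full statement --- you settle only the finitely-atomic case --- but it is more careful than the source, which contains no substitute argument at all. To actually close the endpoint one would decompose the finite measure as $\mu=\mu_c+\mu_a$ with $\mu_c$ atomless and $\mu_a$ purely atomic (at most countably many atoms), write $L_\infty(\mu,X)=L_\infty(\mu_c,X)\oplus_\infty L_\infty(\mu_a,X)$, note that the first summand has the Daugavet property by Remark~\ref{rem: L_1 and L_infty Daugavet} (hence the DD2P, DLD2P and convex DLD2P), identify the second summand with an $\ell_\infty$-sum of countably many copies of $X$, and then invoke stability of the three properties under such (possibly infinite) $\ell_\infty$-sums --- available for the DLD2P via the unconditional-sum results of \cite{Ivakhno2004}, but for the DD2P and the convex DLD2P requiring results from \cite{Becerra2018a} and \cite{Abrahamsen2020a} that this paper never quotes. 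Your instinct that this step ``requires the most care'' is exactly right; it is the step the paper skipped.
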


\begin{rem}\label{rem: L_1 and L_infty Daugavet}
  If $\mu$ does not have any atoms, then both $L_1(\mu,X)$ and $L_\infty(\mu,X)$ even have the Daugavet property for any Banach space $X$ (see \cite[p.~81]{Werner2001}). 
\end{rem}

\section{Projective tensor product}\label{sec: proj ten}

We start this section by recalling the very recent definition of the weak operator Daugavet property, which is (formally) a strengthening of the Daugavet property (see \cite[Remark~5.3]{Martin2020}). 

\begin{defn}[{\cite[Definition~5.2]{Martin2020}}]\label{def: WODP}
  A Banach space $X$ has the \emph{weak operator Daugavet property} (WODP in short) if, given $x_1, \dots, x_n\in S_X$, $\varepsilon>0$, a slice $S$ of $B_X$, and $z\in B_X$, there is a $u\in S$ and $T\colon X\to X$ with $\|T\|\leq 1+\varepsilon$, $\|Tu-z\|\leq \varepsilon$, and $\|Tx_i-x_i\|\leq \varepsilon$ for every $i\in \{1,\dots,n\}$.
\end{defn}

Examples of spaces satisfying WODP include $L_1$-preduals with the Daugavet property and $L_1(\mu, X)$ whenever $\mu$ is atomless and $X$ is an arbitrary Banach space (see \cite{Rueda2019}). It is an open question whether $X\pten Y$ has the Daugavet property whenever $X$ and $Y$ both have the Daugavet property (see \cite[Section~6, Question~(3)]{Werner2001}). In \cite[Theorem~5.4]{Martin2020}, M.~ Mart\'{i}n and A.~Rueda Zoca proved that the WODP is stable by forming the projective tensor product, hence providing a large class of Banach spaces where the answer to the aforementioned question is positive. However, for the DLD2P of $X\pten Y$ it suffices to assume that $X$ has the WODP (cf. \cite[Proposition~5.7, Remark~5.9]{Rueda2019}). 

\begin{prop}\label{prop: DLD2P of proj ten}
  Let $X$ and $Y$ be Banach spaces. If $X$ has the WODP, then $X\pten Y$ has the DLD2P.
  \end{prop}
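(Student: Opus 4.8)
The plan is to verify the DLD2P through its slice formulation: by Lemma~\ref{lem: Delta point crit} it suffices to show that every $z\in S_{X\pten Y}$ is a $\Delta$-point, i.e. that every slice $S=S(B_{X\pten Y},G,\alpha)$ containing $z$ also contains, for each $\varepsilon>0$, a point $w$ with $\|z-w\|\ge 2-\varepsilon$. Here $G\in S_{\mathcal L(Y,X^\ast)}=S_{(X\pten Y)^\ast}$ and $\langle G,x\otimes y\rangle=(Gy)(x)$. Since the finite sums of elementary tensors are dense in $X\pten Y$, I would first approximate, replacing $z$ by a near-optimal finite representation $z_0=\sum_{k=1}^N x_k\otimes y_k\in S$ with $\|z-z_0\|$ as small as desired, with $x_k\in S_X$ after rescaling, and with $\sum_k\|y_k\|$ close to $\|z_0\|\approx 1$; a point far from $z_0$ is then far from $z$ up to the approximation error. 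Writing $g_k:=Gy_k$, the condition $z_0\in S$ reads $\sum_k g_k(x_k)>1-\alpha$, and I record the strict slack $\rho:=\sum_k\|g_k\|-(1-\alpha)>0$.

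Next I would fix a norming functional $H\in S_{(X\pten Y)^\ast}$ for $z_0$ and build the far point as $w=\sum_{k=1}^N v_k\otimes y_k$, where the $v_k\in B_X$ come from the WODP of $X$: apply Definition~\ref{def: WODP} with test vectors $x_1,\dots,x_N\in S_X$, the slices $S_k:=S(B_X,g_k/\|g_k\|,\beta)$ of $B_X$, and the targets $-x_1,\dots,-x_N\in B_X$, producing $v_k\in S_k$ together with a single operator $T\colon X\to X$, $\|T\|\le 1+\varepsilon'$, such that $\|Tv_k+x_k\|\le\varepsilon'$ and $\|Tx_k-x_k\|\le\varepsilon'$ for all $k$. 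Membership $w\in S$ then follows from $v_k\in S_k$, since $\langle G,w\rangle=\sum_k g_k(v_k)>\sum_k\|g_k\|(1-\beta)$, which exceeds $1-\alpha$ once $\beta<\rho/\sum_k\|g_k\|$; here the slack $\rho>0$ is exactly what prevents the coordinatewise loss from pushing $w$ out of the slice. As $\|v_k\|\le1$, one has $\|w\|\le\sum_k\|y_k\|$, so a minor normalization (absorbed by the same slack) places $w$ in $B_{X\pten Y}$.

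For the lower bound on $\|z_0-w\|$ I would transport $H$ through the operator $T\otimes\mathrm{Id}_Y$ on $X\pten Y$, which has norm at most $\|T\|\le 1+\varepsilon'$. Setting $\Phi:=(T\otimes\mathrm{Id}_Y)^\ast H$, so that $\|\Phi\|\le 1+\varepsilon'$ and $\langle\Phi,\zeta\rangle=\langle H,(T\otimes\mathrm{Id}_Y)\zeta\rangle$, the two WODP conditions give $(T\otimes\mathrm{Id}_Y)z_0\approx z_0$ and $(T\otimes\mathrm{Id}_Y)w=\sum_k (Tv_k)\otimes y_k\approx-z_0$, the errors being bounded by $\varepsilon'\sum_k\|y_k\|$. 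Hence $\langle\Phi,z_0\rangle\approx\langle H,z_0\rangle\approx1$ and $\langle\Phi,w\rangle\approx-\langle H,z_0\rangle\approx-1$, so $\|z_0-w\|\ge\langle\Phi,z_0-w\rangle/\|\Phi\|$, and the right-hand side is at least $2-\varepsilon$ once $\varepsilon'$ and the approximation errors are small enough. This closes the estimate.

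The crux is the middle step: manufacturing one operator $T$ that simultaneously keeps all the $x_k$ nearly fixed while dragging each slice witness $v_k$ onto $-x_k$. Unlike the Köthe--Bochner situation of Theorem~\ref{thm: Köthe-Bochner}, where disjointness of the supports $A_k$ lets one invoke the diametral property of $X$ coordinate by coordinate, the projective norm does not decouple along the directions $y_k$, so a \emph{single} global operator is unavoidable, and supplying it is precisely the role of the WODP. The single-target formulation of Definition~\ref{def: WODP} must therefore be used in a several-targets form, obtained by running it with all the $x_k$ as test vectors (cf.\ \cite{Martin2020}); carefully matching the slices $S_k$ to the targets $-x_k$ against the slack $\rho$ is the delicate bookkeeping, and it is exactly here that the hypothesis on $X$ alone — with $Y$ an arbitrary Banach space — is exploited.
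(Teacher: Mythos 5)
Your argument is correct in substance, but it takes a genuinely different route from the paper's proof. The paper uses the WODP exactly \emph{once}: it picks $\hat x\in S_X$, $\hat y\in S_Y$ with $B(\hat x,\hat y)>1-\delta$ (where $B$ is the slice functional), applies Definition~\ref{def: WODP} to the single slice $S(B_X,B(\cdot,\hat y)/\|B(\cdot,\hat y)\|,\delta)$ with the single target $-\hat x$ and test vectors the $x_i$'s from the approximation of $z$, takes the \emph{elementary tensor} $w=u\otimes\hat y$ as the far point (so membership in the slice is automatic and no slack bookkeeping is needed), and witnesses the distance with the functional $B\circ(\Phi\otimes I_Y)$, i.e.\ it recycles the slice functional itself. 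You instead keep the $y_k$'s of the approximation, place a witness $v_k$ in each slice $S(B_X,g_k/\|g_k\|,\beta)$, and witness the distance with $H\circ(T\otimes I_Y)$ for an \emph{independent} norming functional $H$ of $z_0$. Two points of comparison. First, the several-slices/several-targets form of the WODP with a \emph{single} operator that you invoke is strictly more than the definition gives, and it is not obtained merely by ``running it with all the $x_k$ as test vectors'' (that still produces one slice point and one target): one must iterate the WODP, compose the resulting operators, and feed the previously produced $v_j$'s and the remaining targets in as test vectors at each stage. This is precisely the iteration scheme of \cite{Martin2020}, so the step is available, but in a complete write-up it has to be spelled out, whereas the paper's single application avoids it altogether. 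Second, your route buys a robustness the paper's argument, as written, does not have: your estimate $\langle\Phi,z_0-w\rangle\approx 2$ never uses the size of $\langle G,z\rangle$, while the paper's chain of inequalities leading to $T(z)>1-\delta(3+\delta)$ needs $\sum_i\lambda_i B(x_i,y_i)>1-\delta$, although the hypothesis $z\in S(B_{X\pten Y},B,\alpha)$ only yields $\sum_i\lambda_i B(x_i,y_i)>1-\alpha-\delta$; so for wide slices containing $z$ deep inside, the paper's argument requires an additional reduction (e.g.\ replacing $B$ by a normalized convex combination with a norming functional of $z$), a difficulty your choice of $H$ sidesteps entirely.
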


  It is known that for the LD2P of $X\pten Y$ it suffices to assume that $X$ has the LD2P (see \cite[Theorem~2.7]{Abrahamsen2013}). Yet we do not know whether in Proposition~\ref{prop: DLD2P of proj ten} it suffices to assume that $X$ has the DLD2P. Fortunately, it turns out that the convex DLD2P behaves similarly to the LD2P in this manner.

\begin{prop}\label{prop: convex DLD2P and proj tensor product}
  Let $X$ and $Y$ be Banach spaces. If $X$ has the convex DLD2P, then $X\pten Y$ has the convex DLD2P too.
\end{prop}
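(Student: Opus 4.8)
The plan is to use the slice characterisation of the convex DLD2P from Lemma~\ref{lem: convex DLD2P crit}: I must show that every slice of $B_{X \pten Y}$ meets $\Delta_{X \pten Y}$. Fix a slice $S(B_{X \pten Y}, \Phi, \alpha)$, where $\Phi$ lives in the dual $(X \pten Y)^\ast = \mathcal{L}(Y, X^\ast)$; write $\Phi = T$ for the associated operator $T \colon Y \to X^\ast$. Since elementary tensors $x \otimes y$ are dense in $B_{X \pten Y}$ (their closed convex hull is the whole ball) and $\langle x \otimes y, T\rangle = (Ty)(x)$, I can choose a norm-one elementary tensor $x_0 \otimes y_0$ lying well inside the slice, say with $(Ty_0)(x_0) > 1 - \alpha$ and $\|x_0\| = \|y_0\| = 1$. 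The functional $x_0^\ast := Ty_0 \in X^\ast$ then determines a slice $S(B_X, x_0^\ast/\|x_0^\ast\|, \beta)$ of $B_X$ containing $x_0$, for a suitable small $\beta$.

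Next I would exploit that $X$ has the convex DLD2P: by Lemma~\ref{lem: convex DLD2P crit}, the slice of $B_X$ just produced meets $\Delta_X$, so I can find a $\Delta$-point $x \in \Delta_X$ with $x_0^\ast(x)$ close to $\|x_0^\ast\|$ (hence still inside the original slice after replacing $x_0$ by $x$). The key claim is then that the elementary tensor $x \otimes y_0$ is a $\Delta$-point of $X \pten Y$. To verify this, fix $\varepsilon > 0$; since $x \in \Delta_X$ there are $x_1, \dots, x_k \in \Delta_\varepsilon^X(x)$ (so $\|x - x_j\| \geq 2 - \varepsilon$ and $\|x_j\| \leq 1$) with $\|x - \tfrac1k\sum_j x_j\|$ small. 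I would then consider the tensors $x_j \otimes y_0 \in B_{X \pten Y}$ and show two things: first, $\|x \otimes y_0 - \tfrac1k\sum_j x_j \otimes y_0\| = \|(x - \tfrac1k\sum_j x_j)\otimes y_0\| \leq \|x - \tfrac1k\sum_j x_j\|$ is small; and second, each $\|x \otimes y_0 - x_j \otimes y_0\|_\pi = \|(x - x_j)\otimes y_0\|_\pi$ is close to $2$.

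The main obstacle is the second point: controlling the projective norm $\|(x - x_j) \otimes y_0\|_\pi$ from below. The projective norm of an elementary tensor equals the product of the norms, so $\|(x - x_j)\otimes y_0\|_\pi = \|x - x_j\|\,\|y_0\| \geq (2 - \varepsilon)\cdot 1$, which is exactly what I need. This is the one genuinely tensor-product-specific fact in the argument, and it is precisely the reason the convex DLD2P transfers so cleanly: diametrality is preserved because the projective norm is multiplicative on elementary tensors, whereas for the ordinary DLD2P (where one needs \emph{every} unit vector, not just a dense convex-hull-generating set, to be a $\Delta$-point) such a reduction to elementary tensors is unavailable, which is why Proposition~\ref{prop: DLD2P of proj ten} required the stronger WODP hypothesis.

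Assembling these, $x \otimes y_0 \in \Delta_{X \pten Y}$ and it lies in the chosen slice, so by Lemma~\ref{lem: convex DLD2P crit} the space $X \pten Y$ has the convex DLD2P. The only routine care needed is in the bookkeeping of the small parameters $\alpha, \beta, \varepsilon$ to ensure the produced $\Delta$-point genuinely sits inside the original slice $S(B_{X \pten Y}, T, \alpha)$, which follows because $\langle x \otimes y_0, T\rangle = (Ty_0)(x) = x_0^\ast(x)$ can be kept above $1 - \alpha$ by the choice of $\beta$.
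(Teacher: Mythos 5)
Your proposal is correct and follows essentially the same route as the paper's proof: both fix $y_0\in S_Y$ so that $Ty_0$ induces a suitable slice of $B_X$, pick a $\Delta$-point $x$ of $X$ in that slice via Lemma~\ref{lem: convex DLD2P crit}, and then tensor with $y_0$, using that the projective norm is multiplicative on elementary tensors to conclude that $x\otimes y_0$ is a $\Delta$-point of $X\pten Y$ lying in the original slice. The parameter bookkeeping you flag at the end is indeed routine (the paper handles it by working with $\alpha/2$ at both stages), so there is no gap.
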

\begin{proof}
Consider a slice $S(B_{X\pten Y}, A, \alpha)$, where $A$ is a norm one operator from $Y$ to $X^\ast$ and $\alpha>0$. Let $\varepsilon>0$. We need to show that there are $z, z_1,\dots, z_n\in B_{X\pten Y}$ such that $z\in S(B_{X\pten Y}, A, \alpha)\cap \Delta_{X\pten Y}$, $\|z-1/n\sum_{i=1}^n z_i\|\leq \varepsilon$, and $\|z-z_i\|\geq 2-\varepsilon$ for every $i\in \{1,\dots,n\}$. 

Find $y\in S_Y$ such that $(1-\alpha/2)\|Ay\|\geq 1-\alpha.$ Take $x^\ast=Ay/\|Ay\|$ and consider the slice $S(B_X, x^\ast, \alpha/2)$. Since $X$ has the convex DLD2P, then there are $x, x_1,\dots, x_n\in B_{X}$ such that $x\in S(B_X, x^\ast, \alpha/2)\cap \Delta_{X}$, $\|x-1/n\sum_{i=1}^n x_i\|\leq \varepsilon$, and $\|x-x_i\|\geq 2-\varepsilon$ for every $i\in \{1,\dots,n\}$.

Denote by $z=x\otimes y$ and $z_i=x_i\otimes y$ for for every $i\in \{1,\dots,n\}$. Then, clearly $z,z_i\in B_{X\pten Y}$, and $z\in S(B_{X\pten Y}, A, \alpha)$, because
\[
A(z)=(Ay)x=\|Ay\|x^\ast(x)\geq \|Ay\|(1-\alpha/2)\geq 1-\alpha.
\]
Moreover, $\|z-z_i\|=\|x-x_i\|\geq 2-\varepsilon$ and
\[
\Big\Vert z-1/n\sum_{i=1}^n z_i\Big\Vert=\Big\Vert(x-1/n\sum_{i=1}^n x_i)\otimes y\Big\Vert=\Big\Vert x-1/n \sum_{i=1}^n x_i\Big\Vert\leq \varepsilon.
\]
From the arbitrariness of $ \varepsilon$ we conclude that  $X\pten Y$ has the convex DLD2P.
 
\end{proof}

\begin{rem}
  Notice that it follows easily from the proof of Proposition \ref{prop: convex DLD2P and proj tensor product} that if $x\in S_X$ is a $\Delta$-point in $X$ then $x\otimes y$ is a $\Delta$-point in $X\pten Y$ for every $y\in S_Y.$
\end{rem}
In order to  apply Proposition~\ref{prop: convex DLD2P and proj tensor product} in vector-valued Lipschitz-free spaces, we need to introduce some notation.

Given a metric space $M$ with a designated origin $0$ and a Banach space $X$, we will denote by $\Lip(M,X)$ the Banach space of all $X$-valued Lipschitz functions on $M$ which vanish at $0$ under the standard Lipschitz norm
$$\Vert f\Vert:=\sup\left\{ \frac{\Vert f(x)-f(y)\Vert}{d(x,y)}\ :\ x,y\in M, x\neq y \right\} .$$
Note that, $\Lip(M,X^*)$ is itself a dual Banach space. In fact, the map
$$\begin{array}{ccc}
\delta_{m,x}:\Lip(M,X^*) & \longrightarrow & \mathbb R\\
f & \longmapsto & f(m)(x)
\end{array}$$
defines a linear and bounded map for each $m\in M$ and $x\in X$. In other words, $\delta_{m,x}\in \Lip(M,X^*)^*$. If we define
$$\Free(M,X):=\overline{\spa}\{\delta_{m,x}\ :\ m\in M, x\in X\},$$
then we have that $\mathcal F(M,X)^*=\Lip(M,X^*)$. We write $\mathcal F(M):=\mathcal F(M,\mathbb R)$. It is known that $\mathcal F(M,X)=\mathcal F(M)\pten X$ (see \cite[Proposition 2.1]{Becerra2018}). 

\begin{cor}\label{cor: convexDLD2P in F(M)}
  Let $M$ be a complete metric space. Then the following assertions are equivalent:
  \begin{itemize}
    \item[(i)] $\mathcal{F}(M)$ does not have any strongly exposed point;
    \item[(ii)] $\mathcal{F}(M)$ has the convex DLD2P;
    \item[(iii)] $\mathcal{F}(M,X)$ has the convex DLD2P for every nontrivial Banach space $X$.
  \end{itemize}
\end{cor}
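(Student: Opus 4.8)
The plan is to split the cycle into the easy equivalence $(ii)\Leftrightarrow(iii)$, which follows from the transfer result already proved, and the substantive equivalence $(i)\Leftrightarrow(ii)$, which encodes the extremal structure of $\mathcal F(M)$. For $(ii)\Rightarrow(iii)$ I would use the identification $\mathcal F(M,X)=\mathcal F(M)\pten X$: if $\mathcal F(M)$ has the convex DLD2P, then Proposition~\ref{prop: convex DLD2P and proj tensor product} (applied with $\mathcal F(M)$ in the role of the first factor and the arbitrary Banach space $X$ in the role of the second) gives that $\mathcal F(M)\pten X=\mathcal F(M,X)$ has the convex DLD2P, which is $(iii)$. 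For $(iii)\Rightarrow(ii)$ I would simply take $X=\mathbb R$ and use $\mathcal F(M,\mathbb R)=\mathcal F(M)$.

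For $(ii)\Rightarrow(i)$ I would argue by contradiction, and this step is purely Banach-space theoretic. Suppose $\mathcal F(M)$ has the convex DLD2P yet possesses a strongly exposed point $x_0$, exposed by some $f\in S_{\mathcal F(M)^\ast}$. Fix $\varepsilon\in(0,1)$ and choose $\alpha>0$ so small that $\operatorname{diam}S(B_{\mathcal F(M)},f,\alpha)<\varepsilon$, which is possible exactly because $x_0$ is strongly exposed. By Lemma~\ref{lem: convex DLD2P crit} this slice meets $\Delta_{\mathcal F(M)}$, say in a $\Delta$-point $d$; applying the definition of a $\Delta$-point to the slice $S(B_{\mathcal F(M)},f,\alpha)\ni d$ yields a point $y$ in the same slice with $\|d-y\|\ge 2-\varepsilon$. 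Then $2-\varepsilon\le\|d-y\|\le\operatorname{diam}S(B_{\mathcal F(M)},f,\alpha)<\varepsilon$, which is absurd for $\varepsilon<1$. Hence $\mathcal F(M)$ can have no strongly exposed point.

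The implication $(i)\Rightarrow(ii)$ is the core of the statement. Here I would invoke the metric description of the strongly exposed points of $B_{\mathcal F(M)}$ from the extremal-structure theory of Lipschitz-free spaces: every strongly exposed point is a molecule $m_{pq}=(\delta_p-\delta_q)/d(p,q)$, and $m_{pq}$ is strongly exposed exactly when the pair $(p,q)$ admits no approximate midpoints, i.e.\ when for some $\delta>0$ there is no $r\in M$ with $\max\{d(p,r),d(r,q)\}\le(\tfrac12+\delta)d(p,q)$. Under $(i)$ no pair may fail this, so every pair of points of $M$ has approximate midpoints of arbitrary accuracy; since $M$ is complete, this is precisely the statement that $M$ is a length space. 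As recalled in the introduction, for a length space $M$ the free space $\mathcal F(M)$ has the Daugavet property, and following the implications summarised in the diagram of Section~1 the Daugavet property yields the DD2P and hence the convex DLD2P. This gives $(ii)$ and closes the equivalences.

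The delicate ingredient, and the one I would treat with most care, is the appeal to the exact metric characterization of the strongly exposed points of $\mathcal F(M)$; this is a genuine external input. The only nontrivial half needed is that a pair $(p,q)$ without approximate midpoints is strongly exposed by a functional with arbitrarily small slices; the opposite half — that approximate midpoints preclude strong exposedness — is elementary, since for such a midpoint $r$ one has $f(m_{pr})\to1$ along the exposing functional $f$ while $\|m_{pr}-m_{pq}\|$ stays bounded away from $0$. Everything else reduces to the slice estimate of the second paragraph and to the transfer property of Proposition~\ref{prop: convex DLD2P and proj tensor product}.
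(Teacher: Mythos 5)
Your treatment of $(ii)\Leftrightarrow(iii)$ is exactly the paper's: $(ii)\Rightarrow(iii)$ via $\mathcal F(M,X)=\mathcal F(M)\pten X$ and Proposition~\ref{prop: convex DLD2P and proj tensor product}, and $(iii)\Rightarrow(ii)$ by taking $X=\mathbb R$. Your argument for $(ii)\Rightarrow(i)$ is also correct and elementary: by Lemma~\ref{lem: convex DLD2P crit} every slice of $B_{\mathcal F(M)}$ contains a $\Delta$-point, hence has diameter $2$, which is incompatible with the arbitrarily small slices produced by a strongly exposed point. The problem lies entirely in $(i)\Rightarrow(ii)$.

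There you invoke the characterization ``$m_{pq}$ is strongly exposed if and only if $(p,q)$ admits no approximate midpoints,'' and you single out as ``the only nontrivial half needed'' the implication that a pair without approximate midpoints yields a strongly exposed molecule. That implication is false. The correct external input (due to Garc\'ia-Lirola, Petitjean, Proch\'azka and Rueda Zoca) is that $m_{pq}$ is strongly exposed if and only if $(p,q)$ fails \emph{property (Z)}, i.e.\ there is $\varepsilon>0$ such that $d(p,z)+d(z,q)\geq d(p,q)+\varepsilon\min\{d(p,z),d(z,q)\}$ for every $z\in M\setminus\{p,q\}$; and property (Z) is strictly weaker than having approximate midpoints. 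Concretely, take the complete space $M=\{0,1\}\cup\{1/n : n\geq 4\}\subset\mathbb R$: the pair $(0,1)$ has no approximate midpoints (every other point lies within $1/4$ of $0$), yet the points $1/n$ are metrically aligned with $0$ and $1$, so $(0,1)$ has property (Z); in fact $m_{0,1}$ is then a $\Delta$-point, every slice containing it has diameter $2$, and it is not strongly exposed. Consequently, from $(i)$ you may only conclude that \emph{every pair has property (Z)}, not that every pair has approximate midpoints, and the passage from ``complete with property (Z)'' to ``length space'' is precisely the nontrivial theorem of Avil\'es and Mart\'inez-Cervantes \cite{Aviles2019} --- the very result the paper cites, whose Theorem~1.5 packages the whole equivalence $(i)\Leftrightarrow(ii)$. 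By substituting the much stronger midpoint condition into the characterization of strongly exposed points, your argument makes this step look routine, but in doing so it assumes away the crux; the remainder of your chain (length space $\Rightarrow$ Daugavet property $\Rightarrow$ convex DLD2P) is fine once that bridge is restored with the correct statement.
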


\begin{proof}
  (i) $\Leftrightarrow$ (ii) follows from \cite[Theorem~1.5]{Aviles2019}.

  (ii) $\Rightarrow$ (iii) Let $X$ be a Banach space. Since $\mathcal{F}(M,X)=\mathcal{F}(M)\pten X $ and  $\mathcal{F}(M)$ has the convex DLD2P, then $\mathcal{F}(M,X)$ has the convex DLD2P by Proposition~\ref{prop: convex DLD2P and proj tensor product}.

  (iii) $\Rightarrow$ (ii) is clear.
\end{proof}

\begin{rem}\label{rem: convex DLD2P in pten}
  Given a metric space $M$ with a designated origin $0$, the existence of a Banach space $X$ such that $\mathcal F(M,X)$ has the convex DLD2P does not imply that $\mathcal F(M)$ has the convex DLD2P. Indeed, given $X=L_1([0,1])$, it follows that
  $$\mathcal F(M,X)=\mathcal F(M)\pten X=\mathcal F(M)\pten L_1([0,1])=L_1([0,1],\mathcal F(M))$$
  has the Daugavet property (see Remark~\ref{rem: L_1 and L_infty Daugavet}), hence also the convex DLD2P, for every metric space $M$.
  \end{rem}

\section*{Acknowledgments}
The authors would like to thank the anonymous referee for a careful
reading of the manuscript and for suggestions that improved the exposition, in particular, for pointing out the recent paper \cite{Hardtke2020}.


\end{document}